

\documentclass[a4paper]{amsart}
\usepackage{psfrag,graphicx,
}
\usepackage{amsfonts}
\usepackage{amsmath,amscd,amsthm,amssymb,amsxtra,latexsym,epsf, 
epic,eepic,mathrsfs,xcolor}
\usepackage{epsf,latexsym}
\usepackage{enumerate}
\usepackage{todonotes}
\usepackage[all]{xy}
\usepackage[pagebackref]{hyperref}

\newfont{\Bb}{msbm10} 
\newfont{\Bc}{msbm8}
\newcommand{\baseRing}[1]{\ensuremath{\mathbb{#1}}}
\newcommand{\FF}{\baseRing{F}}
\newcommand{\ZZ}{\baseRing{Z}}
\newcommand{\CC}{\baseRing{C}}
\newcommand{\NN}{\baseRing{N}}
\newcommand{\RR}{\baseRing{R}}
\newcommand{\QQ}{\baseRing{Q}}
\newcommand{\PP}{\baseRing{P}}

\newcommand\s{\mathscr}
\begin{document}
\def\vsn{\vskip 10pt\noindent}
\def\vvs{\vskip 10pt}
\def\alt{\mathrm{Alt}}
\def\ii{\'\i }
\def\x{\mathbf{x}}
\def\sign{\text{sign}}
\def\e{\epsilon}
\def\cc{\circ}
\def\ssm{\smallsetminus}
\def\ni{\noindent}
\def\ben{\begin{enumerate}}
\def\een{\end{enumerate}}
\def\bit{\begin{itemize}}
\def\eit{\end{itemize}}
\def\y{\item}
\def\nl{\newline}
\def\vs{\vskip 10pt}
\def\hs{\hskip}
\def\beq{\begin{equation}}
\def\eeq{\end{equation}}
\def\bc{\begin{center}}
\def\ec{\end{center}}
\def\ld{{\ldots}}
\def\cd{{\cdots}}


\def\d{\mathbf{d}}
\def\Gl{\mbox{Gl}}
\def\L{\mathfrak{l}}
\def\pgl{\mathfrak{pgl}}
\def\gl{\mathfrak{gl}}
\def\gg{\mathfrak{g}}
\def\la{\langle}
\def\ra{\rangle}
\def\mm{\mathfrak m}


\def\th{{\theta}}
\def\ve{{\varepsilon}}
\def\pp{{\varphi}}
\def\a{{\alpha}}
\def\l{\left}
\def\r{\right}
\def\rr{{\rho}}
\def\ss{{\sigma}}
\def\g{{\gamma}}
\def\G{{\Gamma}}
\def\o{{\omega}}
\def\O{{\Omega}}
\def\k{{\kappa}}


\def\Proof{{\bf Proof}\hspace{.2in}}
\def\eop{\hspace*{\fill}$\Box$ \vskip \baselineskip}
\def\eoq{{\hfill{$\Box$}}}

\def\bu{\bullet}
\def\tr{{\pitchfork}}
\def\b{{\bullet}}
\def\w{{\wedge}}
\def\p{{\partial}}
\def\coker{\text{Coker}}

\def\supp{\mbox{supp}}
\def\im{\mbox{im}}
\def\Der{\mbox{Der}}

\def\Mat{\mbox{Mat}}
\def\Rep{\mbox{Rep}}
\def\sgn{\mbox{sign}}
\def\cod{\mbox{codim }\ }
\def\hh{\mbox{height }\ }
\def\opp{\mbox{\scriptsize opp}}
\def\vol{\mbox{vol}}
\def\deg{\mbox{deg}}
\def\Sym{\mbox{Sym}}
\def\Ext{\mbox{Ext}}
\def\Hom{\mbox{Hom}}
\def\End{\mbox{End}}
\def\Aut{\mbox{Aut}}
\def\Dh{{\Der(\log h)}}
\def\pd{\mbox{pd}}
\def\dim{{\mbox{dim}}}
\def\depth{{\mbox{depth}}}

\def\lb{\left(}
\def\rb{\right)}
\def\ji{{j_{i}}}
\def\jii{{j_{i'}}}
\def\EE{{\cal E}}
\def\R{{\cal R}}
\def\L{{\cal L}}
\def\C{{\cal C}}
\def\A{{\cal A}}
\def\TT{{\cal T}}
\newcommand{\bbbz}{{\mathbb Z}}


\newcommand{\n}[1]{\| #1 \|}
\newcommand{\um}[1]{{\underline{#1}}}
\newcommand{\om}[1]{{\overline{#1}}}
\newcommand{\fl}[1]{\lfloor #1 \rfloor}
\newcommand{\ce}[1]{\lceil #1 \rceil}
\newcommand{\ncm}[2]
{{\left(\!\!\!\begin{array}{c}#1\\#2\end{array}\!\!\!\right)}}
\newcommand{\ncmf}[2]
{{\left[\!\!\!\begin{array}{c}#1\\#2\end{array}\!\!\!\right]}}
\newcommand{\ncms}[2]
{{\left\{\!\!\!\begin{array}{c}#1\\#2\end{array}\!\!\!\right\}}}


\def\iff{{\Leftrightarrow}}
\def\imp{{\Rightarrow}}
\def\to{{\ \rightarrow\ }}
\def\too{{\ \longrightarrow\ }}
\def\into{{\hookrightarrow}}
\def\D{\Delta}
\def\st{\text{star}}


\newtheorem{theorem}{Theorem}[section]
\newtheorem{lemma}[theorem]{Lemma}
\newtheorem{sit}[theorem]{}
\newtheorem{lemmadefinition}[theorem]{Lemma and Definition}
\newtheorem{proposition}[theorem]{Proposition}
\newtheorem{corollary}[theorem]{Corollary}

\newtheorem{example}[theorem]{Example}
\newtheorem{question}[theorem]{Question}
\theoremstyle{definition}
\newtheorem{definition}[theorem]{Definition}
\newtheorem{conjecture}[theorem]{Conjecture}
\newtheorem{remark}[theorem]{Remark}
\title{Multiple points of a simplicial map and image-computing spectral sequences}
\author{Jos\'e-Luis Cisneros Molina}
\address{Instituto de Matem\'aticas, Unidad Cuernavaca\\ Universidad Nacional Aut\'onoma de M\'exico\\ Av. Universidad s/n, Col. Lomas de Chamilpa\\ Cuernavaca, Morelos, Mexico.}
\email{jlcisneros@im.unam.mx}
\thanks{The first author is Regular Associate of the Abdus Salam International Centre for Theoretical Physics, Trieste, Italy, supported by project CONACYT~253506.}
\author{David Mond}
\address{Mathematics Institute\\ University of Warwick\\ Coventry CV4 7AL, U.K.}  
\email{d.m.q.mond@warwick.ac.uk}

\date{\today}
\subjclass{55-02, 55T99, 32S05  }
\keywords{Multiple-point space, spectral sequence, alternating homology}
\begin{abstract}
The Image-Computing Spectral Sequence computes the homology of the image of a finite map
from the alternating homology of the multiple point spaces of the map. 
A related spectral sequence was obtained by Gabrielov, Vorobjob and Zell in \cite{GVZ:BNSASPS} which computes the homology of the image of a closed map
from the homology of $k$-fold fibred products of the map.
We give new proofs of these results, in case the map can be triangulated.  Thanks to work of
Hardt in \cite{hardt}, this holds for a very wide range of maps, and in particular for most of the finite maps of interest in singularity theory. The proof seems conceptually simpler and
more canonical than earlier proofs.
\end{abstract}
\maketitle

\section{Introduction}

Spectral sequences are a powerful tool to compute (co)homology groups. One way to obtain spectral sequences is from a doble complex: it has associated a total complex which has two canonical
filtrations, given respectively, by the columns and rows of the double complex; each of such filtrations gives rise to a spectral sequence, both of them converging to the homology of the total complex.
One particular case which is very useful, is when one (or both) of the spectral sequences \textit{collapses onto one of the axis}, that is, all the rows (or colums) of the first page of the spectral sequence
are exact. In this case it is trivial to see what is the homology of the total complex. 
Classical examples of this situation are the \v Cech-de Rham (double) complex, which is used to prove that \v Cech cohomology
is isomorphic to de Rham cohomology \cite[Example~14.16]{Bott-Tu:DFAT}, and the Hochschild-Serre spectral sequence which relates the homology of a group to the homology of a normal subgroup and the 
corresponding quotient group \cite[\S VII.6]{Brown:CohoGr}.

The Image-Computing Spectral Sequence (ICSS), computes the homology of the image of a finite map $f\colon X\to Y$
from the alternating homology of the multiple point spaces $D^k(f)$ of $f$. It was introduced by Victor Goryunov and the second author in \cite{gm}, for rational cohomology, and
further developed by Goryunov, in \cite{gor}, for integer homology, to study the topology of the image of a stable perturbation of a map germ $\CC^n\to\CC^p$ with $n<p$.
Besides Singularity Theory \cite{houstontop,houstonspecseq,Hous07,Houston:TACS}, the ICSS has been used in Convex Geometry \cite{Kalai-Meshulam,Colin-etal:Helly}. 
In \cite{GVZ:BNSASPS,Basu-Riener:Equiv-Betti}, a related spectral sequence, which we call the GVZ Spectral Sequence (GVZSS), was used to give bounds of Betti numbers of semialgebraic sets.
It computes the image of a closed  map $f\colon X\to Y$ from the homology of $k$-fold fibred products $W^k(f)$ of $X$. 

The aim of this article is to obtain from a finite map $f\colon X\to Y$, the GVZSS and the ICSS, respectively, from the double complex
of chains of $k$-fold fibred products of $X$ and the double complex of alternating chains of multiple point spaces of $f$, proving that their first spectral sequences
collapse onto the $p$-axis. 
These proofs seem conceptually simpler and more canonical than earlier proofs.

As usual, computing anything directly with singular chains is essentially
impossible, and so for the ICSS, Goryunov in \cite{gor} worked instead with alternating cellular homology. In this paper, the new approach is to work
with (alternating) \emph{simplicial} homology, assuming a triangulation  of the map $f$ -- that is,  simplicial complexes $K$ and $L$ and a simplicial map $F:|K|\to |L|$, 
together with homeomorphisms $|K|\simeq X, |L|\simeq Y$ giving a commutative diagram 
$$\xymatrix{|K|\ar[d]_F\ar[r]^\simeq&X\ar[d]^f\\
|L|\ar[r]^\simeq&Y}.$$ 
We show in Section \ref{fsm} that a triangulation of $f$ gives rise to  triangulations of $W^k(f)$ and $D^k(f)$ for each $k$, and that these 
triangulations fit together in a remarkably simple way. We will refer to them as the triangulations {\it associated to the simplicial structure of $f$}.

\section{Double complexes of multiple point spaces}\label{alt.hom}

Let $f\colon X\to Y$ be a continuous surjective map with finite fibres. For each $k\geq 0$, the {\it $k$-fold product of $X$ fibred over $f$} is the subspace of $X^k$ defined by
\begin{equation*}
W^k(f)=\{(x_1,\dots,x_k)\in X^k\,|\, f(x_1)=\dots=f(x_k)\}.
\end{equation*}
The symmetric group $S_k$ acts on $W^k(f)$ and $D^k(f)$ permuting the copies of $X$.

Let $\pi_\ell\colon W^k(f)\to X$ with $\ell=1,\dots,k$ be the restriction of the $\ell$'th standard projection. Consider the map $f^{(k)}\colon W^k(f)\to Y$ defined by 
$f^{(k)}(\mathbf{x})=f\bigl(\pi_\ell(\mathbf{x})\bigr)$ for some $\ell$ with $1\leq\ell\leq k$. Evidently this is independent of the choice of $\ell$, so for convenience we take $\ell=1$.
There are projections $\ve^{i,k}\colon W^k(f)\to W^{k-1}(f)$, $i=1,\ld, k$, forgetting the $i$'th component.

Let $C_n(W^k(f))$ be the usual free abelian group of singular $n$-chains in $W^k(f)$ and consider the singular chain complex $(C_\bullet(W^k(f),\partial^k_\bullet)$ with the usual boundary map
\begin{equation*}
\p_n^k\colon C_n(W^k(f))\to C_{n-1}(W^k(f)).
\end{equation*}

The map $\ve^{i,k}\colon Z^k\to Z^{k-1}$ induces a morphism of chain complexes, 
\begin{equation*}
\ve^{i,k}_{\#}\colon C_\bu(W^k(f))\to C_\bu(W^{k-1}(f)).
\end{equation*}
We denote by $\ve^{i,k}_{\#,n}\colon C_n(Z^k)\to C_n(Z^{k-1})$ the corresponding homomorphism on $n$-chains. 
Define the morphism $\rho^k_n\colon C_n(W^k(f))\to C_n(W^{k-1}(f))$ by
\begin{equation}\label{eq:rho}
\rho^k_n=\sum_{i=1}^k(-1)^{i-1}\ve^{i,k}_{\#,n}.
\end{equation}
In Section~\ref{fsm}  we will prove that $\rho^{k-1}_n\circ\rho^k_n=0$ and $f_{\#,n}\circ\rho^2_n=0$ (Lemmata~\ref{lem:rho.rho} and \ref{lem:f.rho}), hence for each $n$, the sequence of $n$-chains
\begin{equation}\label{eq:wcc}
\dots\to C_n(W^k(f))\xrightarrow{\rho^k_n}\cdots
\xrightarrow{\rho^3_n}C_n(W^2(f))\xrightarrow{\rho^2_n}C_n(X)\xrightarrow{f_{\#,n}}C_n(Y)\to0,
\end{equation}
is a chain complex.

\subsection{Alternating homology of multiple point spaces}

We also define the $k$'th {\it multiple point space} $D^k(f)$ as the closure, in $X^k$, of the set of strict multiple points\footnote{A slight variant on this definition is
used in e.g. \cite{GaffArc}, \cite{nunpen} when one wishes to describe how the multiple points change when one deforms a map. In 
this paper we will not need to do so, and so we use the simpler definition.}
\begin{equation*}
D_{\text{S}}^k(f)=\{(x_1,\ld,x_k)\in X^k: x_i\neq x_j\ \text{if }i\neq j, f(x_i)=f(x_j)\ \text{for all }i,j\}.
\end{equation*}
Notice that $D^k(f)$ is a subspace of $W^k(f)$ and the action of $S_k$ restricts to $D^k(f)$.

We also consider the restrictions to $D^k(f)$ of the maps $\pi_\ell$, $f^{(k)}$ and $\ve^{i,k}$, $i=1,\ld, k$.
Since all the maps $\ve^{i,k}$ are left-right equivalent to one another thanks to the symmetric group actions, when restricted to $D^k(f)$,
we will use only $\ve^{k,k}$, which we abbreviate to $\ve^k$. 
Thus, we obtain a {\it tower of multiple point spaces}
\begin{equation}\label{eq:tmp}
\dots\to D^{k+1}(f)\xrightarrow{\varepsilon^{k+1}}D^k(f)\xrightarrow{\varepsilon^{k}}\cdots\xrightarrow{\varepsilon^{3}}D^2(f)\xrightarrow{\varepsilon^{2}}X\xrightarrow{f}Y.
\end{equation}

Let $Z^k$ denote $W^k(f)$ or $D^k(f)$. 
Let $C_n(Z^k)$ be the usual free abelian group of singular $n$-chains in $Z^k$. We define the group of {\it alternating $n$-chains} $C_n^\alt(Z^k)$ by
\begin{equation*}
C_n^\alt(Z^k)=\{c\in C_n(Z^k): \sigma_\#(c)=\sign(\sigma)c\ \text{for all }\sigma\in S_k\},
\end{equation*}
and the {\it alternating chain complex} as $C^\alt_\bu(Z^k)$ with the usual boundary map
\begin{equation*}
\p_n^k\colon C_n^\alt(Z^k)\to C_{n-1}^\alt(Z^k).
\end{equation*}
It is a subcomplex of the usual singular chain complex. We call its homology the {\it alternating homology of $Z^k$} and denote it by $A H_*(Z^k)$. 
The notation $\alt H_*(Z^k)$ is used in \cite{gor}; we prefer ours, since $\alt H_*(Z^k)$ can be misunderstood as the image of an operator $\alt:H_*(Z^k)\to H_*(Z^k)$. 

\begin{remark}
Kevin Houston proved in \cite[Theorem~3.4]{houstonspecseq} that the alternating homologies of $W^k(f)$ and $D^k(f)$ are canonically isomorphic:
\begin{equation*}
AH_n(W^k(f))=AH_n(D^k(f)),
\end{equation*}
since on the diagonals, i.~e., on $W^k(f)\setminus D^k(f)$ there are no alternating chains \cite[Theorem~2.7]{houstonspecseq}.
\end{remark}

Let $d_j\colon\{1,\dots,k-1\}\to\{1,\dots,k\}$ be the monotone non-decreasing inclusion which does not have $j$ in its image, i.~e., given by
\begin{equation*}
 d_j(i)=\begin{cases}
         i & \text{if $i<j$,}\\
         i+1 & \text{if $i\geq j$.}
        \end{cases}
\end{equation*}
Given a permutation $\sigma\in S_{k-1}$ we define the permutation $\bar{\sigma}^j\in S_k$ given by
\begin{equation*}
\bar{\sigma}^j(i)=\begin{cases}
		  d_j(\sigma(i)) & \text{if $i<j$,}\\
		  j & \text{if $i=j$,}\\
		  d_j(\sigma(i-1)) & \text{if $i>j$.}
		  \end{cases}
\end{equation*}
We have that $\sign(\bar{\sigma}^j)=\sign(\sigma)$.
By a straightforward computation we get
\begin{equation}\label{eq:bar.sigma}
\sigma\circ \ve^{j,k} =\ve^{j,k}\circ \bar{\sigma}^j. 
\end{equation}

The chain morphisms $\rho^k_n$ and $\ve^{i,k}_{\#}$ restrict to a chain morphism of the subcomplex of alternating chains:
\begin{lemma}\label{lem:r.s.alt}
There are the following inclusions:
\begin{enumerate}[1.]
 \item $\rho^k_n(C^\alt_n(W^k(f))\subset C^\alt_n(W^{k-1}(f))$,
 \item $\ve^{j,k}_{\#,n}(C_n^\alt(D^k(f))\subset C_n^\alt(D^{k-1}(f))$ for $j=1,\dots,k$.\label{it:eps.alt}
\end{enumerate}
\end{lemma}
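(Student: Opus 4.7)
The plan is to reduce both parts of the lemma to the intertwining identity \eqref{eq:bar.sigma}, together with the sign equality $\sign(\bar\sigma^j)=\sign(\sigma)$ recorded just above it. Together these say that, after passing to induced maps on chains, the face map $\ve^{j,k}_\#$ converts the $S_k$-action on $C_n(Z^k)$ (pulled back along $\sigma\mapsto\bar\sigma^j$) into the natural $S_{k-1}$-action on $C_n(Z^{k-1})$, preserving signs. Once this is in hand, the alternating condition passes through with almost no work.

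To carry out part (2), I would fix $c\in C_n^\alt(D^k(f))$ and a permutation $\sigma\in S_{k-1}$, apply the chain-level form of \eqref{eq:bar.sigma} to get
\[
\sigma_\#\bigl(\ve^{j,k}_\#(c)\bigr)=\ve^{j,k}_\#\bigl(\bar\sigma^j_\#(c)\bigr),
\]
and then use that $\bar\sigma^j\in S_k$ and that $c$ is alternating to rewrite this as $\sign(\bar\sigma^j)\,\ve^{j,k}_\#(c)=\sign(\sigma)\,\ve^{j,k}_\#(c)$. This is exactly the defining condition of $C_n^\alt(D^{k-1}(f))$. A small preliminary to check is that $\ve^{j,k}$ really does restrict to a map $D^k(f)\to D^{k-1}(f)$: it clearly sends strict multiple points to strict multiple points, and then the assertion for closures follows by continuity.

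Part (1) follows by the same calculation applied term-by-term to the defining sum of $\rho^k_n$. For $c\in C_n^\alt(W^k(f))$ and $\sigma\in S_{k-1}$,
\[
\sigma_\#\bigl(\rho^k_n(c)\bigr)=\sum_{i=1}^k(-1)^{i-1}\sigma_\#\bigl(\ve^{i,k}_\#(c)\bigr)=\sign(\sigma)\sum_{i=1}^k(-1)^{i-1}\ve^{i,k}_\#(c)=\sign(\sigma)\,\rho^k_n(c),
\]
so $\rho^k_n(c)\in C_n^\alt(W^{k-1}(f))$, as required.

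I do not expect any real obstacle: the content of the lemma is concentrated entirely in the permutation combinatorics already recorded in \eqref{eq:bar.sigma} and in the sign identity, which are purely statements about lifting a permutation of $\{1,\dots,k-1\}$ to one of $\{1,\dots,k\}$ that fixes $j$. Once those are granted, the rest is a mechanical application of the functoriality of singular chains.
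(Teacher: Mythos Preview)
Your proof is correct and follows essentially the same route as the paper: both parts are reduced to the chain-level intertwining $\sigma_\#\circ\ve^{j,k}_\#=\ve^{j,k}_\#\circ\bar\sigma^j_\#$ from \eqref{eq:bar.sigma} together with $\sign(\bar\sigma^j)=\sign(\sigma)$, and part (1) is obtained by applying this term-by-term to the defining sum of $\rho^k_n$. Your presentation of part (1) is in fact slightly cleaner than the paper's, which passes through an intermediate equation $\sigma_\#(\rho^k_n(c))=\rho^k_n(\bar\sigma^j_\#(c))$ that only makes sense once $c$ is already assumed alternating; your explicit term-by-term computation avoids this ambiguity.
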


\begin{proof} 
\begin{enumerate}[1.]
 \item By \eqref{eq:bar.sigma}, for any $n$-chain $c$ in $W^k(f)$ and any $\sigma\in S_{k-1}$ we have
\begin{equation}\label{eq:sigma.epsilon}
\sigma_\#(\ve^{j,k}_{\#,n}(c))=\ve^{j,k}_{\#,n}(\bar{\sigma}^j_\#(c)),\quad\text{for $j=1,\dots,k$.}
\end{equation}
From \eqref{eq:sigma.epsilon} and the definition of $\rho^k_n$ given in \eqref{eq:rho} it follows that
\begin{equation*}
\sigma_\#(\rho^k_n(c))=\rho^k_n(\bar{\sigma}^j_\#(c)).
\end{equation*}
If $c$ is alternating
\begin{equation*}
\sigma_\#(\rho^k_n(c))=\rho^k_n(\bar{\sigma}^j_\#(c))=\rho^k_n(\sign(\bar{\sigma}^j)(c))=\sign(\sigma)\rho^k_n(c).
\end{equation*}
\item Taking in \eqref{eq:sigma.epsilon} $c$ an alternating $n$-chain in $D^k(f)$ we have
\begin{equation*}
\sigma_\#(\ve^{j,k}_{\#,n}(c))=\ve^{j,k}_{\#,n}(\bar{\sigma}^j_\#(c))=\ve^{j,k}_{\#,n}(\sign(\bar{\sigma}^j)(c))=\sign(\sigma)\ve^{j,k}_{\#,n}(c).
\end{equation*}
\end{enumerate}
\end{proof}
It is convenient to include $X$ and $Y$ in the tower of multiple points spaces, as $D^1(f)$ and $D^{0}(f)$, and in this context to regard $f:X\to Y$ as $\ve^1:D^1(f)\to D^0(f)$. 
\begin{lemma}\label{isacomplex}
$\ve^{k-1}_\#\circ\ve^k_\#=0$ on $C_\bu^\alt(D^k(f))$.
\end{lemma}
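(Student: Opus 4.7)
The plan is to exploit the symmetric group action on $D^k(f)$ to force the composition $\varepsilon^{k-1}_\#\circ\varepsilon^k_\#$ to be equal to its own negative on alternating chains, after which the conclusion follows because chain groups are free abelian.

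First, I would observe the simple set-theoretic identity at the level of spaces. Since $\varepsilon^k=\varepsilon^{k,k}$ forgets the last coordinate, the composition $\varepsilon^{k-1}\circ\varepsilon^k:D^k(f)\to D^{k-2}(f)$ sends $(x_1,\ldots,x_k)$ to $(x_1,\ldots,x_{k-2})$, i.e.\ it forgets the last two coordinates. Now let $\tau=(k-1,k)\in S_k$ be the transposition of the last two entries. Because $\tau$ acts only on the coordinates that are forgotten, we have the equality of continuous maps
\begin{equation*}
\varepsilon^{k-1}\circ\varepsilon^k\circ\tau \;=\; \varepsilon^{k-1}\circ\varepsilon^k \colon D^k(f)\to D^{k-2}(f).
\end{equation*}

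Passing to singular chains gives $\varepsilon^{k-1}_\#\circ\varepsilon^k_\#\circ\tau_\#=\varepsilon^{k-1}_\#\circ\varepsilon^k_\#$. For any alternating chain $c\in C_n^\alt(D^k(f))$ one has $\tau_\#(c)=\sign(\tau)\,c=-c$, whence
\begin{equation*}
\varepsilon^{k-1}_\#\circ\varepsilon^k_\#(c) \;=\; \varepsilon^{k-1}_\#\circ\varepsilon^k_\#\bigl(\tau_\#(c)\bigr) \;=\; -\,\varepsilon^{k-1}_\#\circ\varepsilon^k_\#(c).
\end{equation*}
Therefore $2\,\varepsilon^{k-1}_\#\circ\varepsilon^k_\#(c)=0$ in $C_n(D^{k-2}(f))$.

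Since $C_n(D^{k-2}(f))$ is a free abelian group (generated by singular $n$-simplices), it is torsion-free, so $\varepsilon^{k-1}_\#\circ\varepsilon^k_\#(c)=0$. Note that by Lemma~\ref{lem:r.s.alt}(\ref{it:eps.alt}) this element in fact already lies in $C_n^\alt(D^{k-2}(f))$, but we do not need this for the vanishing argument. There is no real obstacle here: the only subtle point is recognising that the transposition of two coordinates that will subsequently be forgotten is the right group element to use, which turns the alternating property into the desired cancellation.
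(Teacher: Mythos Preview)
Your proof is correct and follows essentially the same route as the paper's: both use the transposition $(k-1,k)$ and the identity $\varepsilon^{k-1}\circ\varepsilon^k\circ(k-1,k)=\varepsilon^{k-1}\circ\varepsilon^k$ to show the composition equals its own negative on alternating chains. You are simply a bit more explicit than the paper in invoking torsion-freeness of the chain group to pass from $2x=0$ to $x=0$.
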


\begin{proof}
Consider the action of the odd permutation $(k-1,k)$. We have 
$$\ve^{k-1}\circ\ve^k\circ (k-1,k)=\ve^{k-1}\circ\ve^k,$$
and so 
$$\ve^{k-1}_{\#,n}\circ\ve^k_{\#,n}\circ (k-1,k)_\#=\ve^{k-1}_{\#,n}\circ\ve^k_{\#,n}.$$
But if $c$ is an alternating $n$-chain, 
$$\ve^{k-1}_{\#,n}\circ\ve^k_{\#,n}\circ (k-1,k)_\#(c)=\ve^{k-1}_{\#,n}\circ\ve^k_{\#,n}(-c)=-\ve^{k-1}_{\#,n}\ve^k_{\#,n}(c).$$
\end{proof}
It follows from Lemma~\ref{lem:r.s.alt}--\ref{it:eps.alt} with $j=k$ and Lemma~\ref{isacomplex} that for each $n$, 
the sequence induced on alternating $n$-chains by the tower of multiple point spaces \eqref{eq:tmp},
\begin{equation}\label{eq:rcy}
\dots\to C_n^\alt(D^k(f))\xrightarrow{\ve^{k}_{\#,n}}\cdots
\xrightarrow{\ve^{3}_{\#,n}}C_n^\alt(D^2(f))\xrightarrow{\ve^{2}_{\#,n}}C_n(X)\xrightarrow{f_{\#,n}}C_n(Y)\to0,
\end{equation}
is a chain complex.

In Section~\ref{DC-ICSS}, using a standard sign trick, we will obtain two first quadrant double complexes 
$(C_n(W^k(f)),\p_n^k, \varrho^k_n)$ and $(C^\alt_n(D^k(f)),\p_n^k, \epsilon^k_{\#,n})$
from the commutative diagrams of free abelian groups and homomorphisms given by \eqref{eq:wcc} and \eqref{eq:rcy}.
Given any  first quadrant double complex,  two standard spectral sequences compute the homology of the total complex. 
The GVZSS and ICSS are the spectral sequences resulting from these double complexes by first applying the differential $\p_q$ to get, respectively, 
$E^1_{p,q}=H_q(W^{p+1}(f))$ and $E^1_{p,q}=AH_q(D^{p+1}(f))$ with differentials $(\varrho^{p+1}_q)_*$ and $(\epsilon^{p+1}_q)_*$. 
The opposite spectral sequences, in which one first applies the differentials $\varrho^q$ and  $\epsilon^q$, are hard to make sense of with (alternating) singular homology. 
Our main result here is the following.
\begin{theorem}\label{thm:mt}
Let $f\colon X\to Y$ be a finite surjective simplicial map. 
Let $Z^k$ be $W^k(f)$ or $D^k(f)$.
Then each projection $\ve^{i,k}\colon Z^k\to Z^{k-1}$ is a simplicial map with respect to the associated triangulations of the $Z^k$, and for each $n\in \NN$, the 
resulting complexes \eqref{eq:wcc} and  \eqref{eq:rcy}, now with (alternating) \emph{simplicial} chains, are acyclic resolutions of $C_n(Y)$.
\end{theorem}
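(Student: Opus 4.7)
The argument rests on the combinatorial description of the triangulations developed in Section~\ref{fsm}. For each simplex $\sigma$ of $L$, let $T_\sigma$ denote the set of simplices of $K$ mapping bijectively onto $\sigma$; since $f$ is finite simplicial (hence injective on each simplex) and surjective, $T_\sigma$ is non-empty and all its elements have dimension $\dim \sigma$. I would first invoke Section~\ref{fsm} to conclude that the $n$-simplices of $W^k(f)$ are in canonical bijection with pairs $\bigl(\sigma,(\tau_1,\ldots,\tau_k)\bigr)$ where $\sigma\in L_n$ and $(\tau_1,\ldots,\tau_k)\in T_\sigma^k$, and that $\ve^{i,k}$ sends such a simplex to $\bigl(\sigma,(\tau_1,\ldots,\hat\tau_i,\ldots,\tau_k)\bigr)$, hence is simplicial. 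This yields the first assertion of the theorem.

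For the acyclicity of \eqref{eq:wcc}, I would decompose the simplicial chain complex at level $n$ according to $\sigma\in L_n$:
\[
C_n(W^k(f))=\bigoplus_{\sigma\in L_n}\ZZ[T_\sigma^k],\qquad C_n(X)=\bigoplus_{\sigma\in L_n}\ZZ[T_\sigma],\qquad C_n(Y)=\bigoplus_{\sigma\in L_n}\ZZ\cdot\sigma.
\]
Under this splitting $\rho^k_n$ preserves each summand and restricts to the classical alternating-sum-of-projections differential on $\ZZ[T_\sigma^k]\to\ZZ[T_\sigma^{k-1}]$, while $f_{\#,n}$ restricts to the augmentation $\ZZ[T_\sigma]\to\ZZ$. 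The per-$\sigma$ complex is then the standard bar-type resolution, for which fixing any $\tau_0\in T_\sigma$ (available since $T_\sigma\neq\emptyset$) and setting $h(\tau_1,\ldots,\tau_k)=(\tau_0,\tau_1,\ldots,\tau_k)$ gives a contracting homotopy $\partial h+h\partial=\mathrm{id}$. Summing over $\sigma$ gives the desired acyclic resolution of $C_n(Y)$.

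For \eqref{eq:rcy}, I would observe that, since any alternating chain must vanish on a simplex fixed by a transposition in $S_k$, the same decomposition specializes to
\[
C_n^\alt(D^k(f))=\bigoplus_{\sigma\in L_n}\bigwedge\nolimits^{k}\ZZ[T_\sigma],
\]
and $\ve^{k}_{\#,n}$ restricts on each summand to the standard simplicial boundary on the exterior algebra. The per-$\sigma$ complex $\cdots\to\bigwedge^{k}\ZZ[T_\sigma]\to\cdots\to\ZZ[T_\sigma]\to\ZZ\to 0$ is the augmented simplicial chain complex of the abstract simplex on vertex set $T_\sigma$; since $T_\sigma\neq\emptyset$ this simplex is contractible, so the complex is exact. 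Summing over $\sigma$ yields exactness of \eqref{eq:rcy}.

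The only real obstacle is the combinatorial setup of Phase~1, which is the content of Section~\ref{fsm}: producing the triangulations of $W^k(f)$ and $D^k(f)$, verifying that the projections are simplicial, and confirming the indexing of simplices by $(\sigma,\vec\tau)$. Once this is in hand, the proof of acyclicity reduces, fibre-by-fibre over the simplices of $L$, to two familiar resolutions of $\ZZ$ --- the bar resolution and the augmented chain complex of a simplex --- which is the source of the conceptual simplicity the introduction advertises.
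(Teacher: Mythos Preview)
Your proposal is correct and follows essentially the same route as the paper: the paper likewise decomposes each chain group over ordered $n$-simplices $\Delta$ of $Y$ (Lemmas~\ref{subcomp}, \ref{lem:W.splitting} and the analogue for $D^\bullet$), proves exactness of the $W^\bullet$-complex via the contracting homotopy $s_k^{\Delta'}$ prepending a fixed lift $\Delta'$ (Proposition~\ref{prop:wcc.exact}), and for the $D^\bullet$-complex identifies $C_n^\alt(D^k(f))|_\Delta$ with the free module on $k$-subsets of the lifts $\Delta_1,\ldots,\Delta_N$ (Lemma~\ref{dirs}) and then with the augmented oriented chain complex of the standard $(N{-}1)$-simplex (Proposition~\ref{main}). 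Your exterior-algebra phrasing is exactly this identification; the only detail you elide is that $\ve^k_{\#}$ matches the Koszul/simplicial boundary only up to a global sign $(-1)^{k-1}$, which the paper tracks explicitly but which is irrelevant for exactness.
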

From this it follows immediately that the opposite spectral sequences collapse onto the $p$-axis and have $E^1_{p,0}=C_q(Y)$ with differential the usual boundary operator, thus showing that 
the homology of the total complex is that of $Y$.  \vsn
\ni {\it Here, and in what follows,  $X$ and $Y$ are geometric \emph{simplicial} complexes, and $C_n$ and $C_n^\alt$ denote groups of \emph{simplicial} chains, and alternating \emph{simplicial} chains.}
\vskip 10pt
Before proceeding with the proof, let us point out that the images of ``generic'' smooth or complex analytic maps $f\colon M^n\to N^p$
(where $M$ and $N$ are smooth or complex analytic manifolds with $n<p$) are in general very singular, whereas it turns
out that the multiple point spaces $D^k(f)$ are in general less so, and this makes their homology more accessible than that of the image itself. If, for example, $f$ is right-left stable 
in the sense of Thom and Mather, e.g. \cite{mather2}, and at all non-immersive points, $\dim\ker d_xf\leq 1$,  then the $D^k(f)$ are smooth. This condition on kernel rank holds for all right-left stable maps if $n<6$.  We say more about the applications of the ICSS in Singularity Theory in Section \ref{singularitytheory} below.

\section{The GVZSS and the ICSS from the double complexes}\label{DC-ICSS}

In this section, given a finite surjective simplicial map $f\colon X\to Y$ between geometric simplicial complexes $X$ and $Y$, using Theorem~\ref{thm:mt} we obtain the GVZSS and  the ICSS from, 
respectively, the second spectral sequence associated to the double complexes given by the complexes \eqref{eq:wcc} and \eqref{eq:rcy}.

\subsection{Spectral sequences arising from a double complex}

Here we summarise the definitions and main results on spectral sequences that we use. Good references for spectral sequences are \cite{McCleary,Rotman,Weibel:HomAlg}. For some intuition on the ICSS which can help navigate this material, see \cite[\S 1.2 and \S1.6]{mond15}.

A {\it homology spectral sequence} consists of the following data:
\begin{enumerate}
 \item A family $\{E^r_{p,q}\}$ of modules for all integers $p$, $q$ and $r\geq1$. The {\it total degree} of the term $E^r_{p,q}$ is $n=p+q$.
 \item Homomorphisms $d^r_{p,q}\colon E^r_{p,q}\to E^r_{p-r,q+r-1}$ such that $d^r\circ d^r=0$.
 \item Isomorphisms between $E^{r+1}_{p,q}$ and the homology of $E^r_{*,*}$ at the term $E^r_{p,q}$:
$$E^{r+1}_{p,q}\cong\frac{\ker d^r_{p,q}}{\im\ d_{p+r,q-r+1}}.$$
\end{enumerate}
The collection of modules $E^r_{p,q}$ for fixed $r$, together with the morphisms $d^r_{p,q}$, is often referred to as the $r$'th {\it page} of the spectral sequence. Thus, item 3 above says that the
$r+1$'st page is obtained by taking the homology of the $r$'th page.

A {\it first quadrant spectral sequence} is one with $E^r_{p,q}=0$ unless $p\geq0$ and $q\geq0$. If this condition holds for $r=1$, then it holds for all $r\geq1$. For fixed $p$ and $q$, $E^r_{p,q}=E^{r+1}_{p,q}$ for all $r>\max\{p,q+1\}$, because then every arrow $d^r_{p,q}$ beginning at $E^r_{p,q}$ must end at $0$, and every arrow ending at $E^r_{p,q}$ must begin at 0. We write $E^\infty_{p,q}$ for this stable value of $E^r_{p,q}$.

A homology spectral sequence is {\it bounded} if for each $n$ there are only finitely many nonzero terms of total degree $n$ in $E^1_{*,*}$. Thus a first quadrant spectral sequence is bounded. 
A bounded spectral sequence {\it converges} to a graded module $H_*$, denoted by $E^r_{p,q}\Longrightarrow H_{p+q}$, if for each $n$, $H_n$ has a finite filtration
$$0=F^sH_n\subseteq\cdots\subseteq F^{p-1}H_n\subseteq F^pH_n\subseteq F^{p+1}H_n\subseteq\cdots\subseteq F^tH_n=H_n,$$
and there are isomorphisms
$$E^\infty_{p,q}\cong F^pH_{p+q}/F^{p-1}H_{p+q}.$$

A {\it double complex} is an ordered triple $(C,d',d'')$ where $C=(C_{p,q})$ is a bigraded module, $d'_{p,q}\colon C_{p,q}\to C_{p-1,q}$ and $d''_{p,q}\colon C_{p,q}\to C_{p,q-1}$ such that $d'_{p-1,q}\circ d'_{p,q}=0$, $d''_{p,q-1}\circ d''_{p,q}=0$ and
$$d'_{p,q-1}\circ d''_{p,q}+d''_{p-1,q}\circ d'_{p,q}=0.$$
A double complex $(C_{p,q})$ is a {\it first quadrant double complex} if $C_{p,q}=0$ whenever $p<0$ or $q<0$.

If $C$ is a double complex, its {\it total complex}, denoted by $\mathrm{Tot}(C)$, is the chain complex \cite[Lemma~10.5]{Rotman} with $n$'th term
$$\mathrm{Tot}(C)_n=\bigoplus_{p+q=n}C_{p,q},$$
with differentials $D_n\colon\mathrm{Tot}(C)_n\to\mathrm{Tot}(C)_{n-1}$ given by
$$D_n=\sum_{p+q=n} (d'_{p,q}+d''_{p,q}).$$
We can define two filtrations of $\mathrm{Tot}(C)$ by
$${}^IF^p(\mathrm{Tot}(C)_n)=\bigoplus_{i\leq p}C_{i,n-i},\qquad {}^{II}F^p(\mathrm{Tot}(C)_n)=\bigoplus_{j\leq p}C_{n-j,j}.$$
These two filtrations determine two spectral sequences \cite[Theorem~10.16]{Rotman}. If $C$ is a first quadrant double complex, both filtrations are bounded and both
spectral sequences converge to the homology of the total complex of $C$. The first filtration gives the \emph{first spectral sequence}
\begin{equation}\label{eq:I.SS}
\begin{aligned}
 {}^IE^0_{pq}&=C_{p,q},\quad \text{with differentials $d^0_{p,q}=d''_{p,q}$,}\\
 {}^IE^1_{pq}&=H''_q(C_{p,*}),\quad  \text{with differentials $d^1_{p,q}=(d'_{p,q})_*$,}\\[5pt]
 {}^IE^2_{pq}&=H'_pH''_q(C)\Rightarrow H_{p+q}(\mathrm{Tot}(C)).
 \end{aligned}
\end{equation}
The second filtration gives the \emph{second spectral sequence}
\begin{equation}\label{eq:II.SS}
\begin{aligned}
 {}^{II}E^0_{pq}&=C_{q,p},\quad \text{with  differentials $d^0_{p,q}=d'_{p,q}$,}\\
 {}^{II}E^1_{pq}&=H'_q(C_{*,p}),\quad  \text{with differentials $d^1_{p,q}=(d''_{p,q})_*$,}\\
 {}^{II}E^2_{pq}&=H''_pH'_q(C)\Rightarrow H_{p+q}(\mathrm{Tot}(C)).
\end{aligned}
\end{equation}
Here $(d'_{p,q})_*$ and $(d''_{p,q})_*$ denote respectively, the homomorphisms induced in homology by the differentials $d'_{p,q}$ and $d''_{p,q}$ of the double complex.
Both spectral sequences converge to the homology of the total complex of $C$. In both, the higher differentials involve both $d'$ and $d''$, and can be constructed by diagram chasing. 
In each of these cases, the filtration on the homology of the total complex is determined by the spectral sequence itself, and is not an additional independent datum.
If a first quadrant spectral sequence converges to $H_n$, then each $H_n$ has a finite filtration of length $n+1$
\begin{equation}\label{eq:filt.f.q}
0=F_{-1}H_n\subset F_0H_n\subset\dots\subset F_{n-1}H_n\subset F_nH_n=H_n.
\end{equation}
The bottom part $F_0H_n=E^\infty_{0n}$ lies on the $q$-axis, the intermediate quotients are $F_pH_n/F_{p-1}H_n\cong E^\infty_{p,n-p}$ and 
the top quotient $H_n/F_{n-1}H_n\cong E^\infty_{n0}$ lies on the $p$-axis.
    
A spectral sequence $(E^r,d^r)$ {\it collapses} onto the $p$-axis if $E^2_{p,q}=0$ for all $q\neq 0$. In this case $E^\infty_{p,q}=E^2_{p,q}$ 
and $H_n(\mathrm{Tot}(C))\cong E^2_{n,0}$ \cite[Proposition~10.21]{Rotman}. 
More generally, a spectral sequence {\it collapses at $E^r$} if all the differentials $d^s_{p,q}$ for $s\geq r$ and all $p,q$ are zero; 
in this case $E^s_{p,q}=E^r_{p,q}$ for $s\geq r,$ and so $E^\infty_{p,q}=E^r_{p,q}$.
\vskip 10pt

\subsection{The GVZSS and the ICSS}

Let $f\colon X\to Y$ be a finite surjective simplicial map. 
Since $\rho^k_n$ and $\ve^{i,k}_{\#,n}$ are chain morphism, the diagrams of free abelian groups and homomorphisms given by the arrays
\begin{equation*}
 (C_n(W^k(f)),\p_n^k, \rho^k_n),\qquad\text{and}\qquad (C^\alt_n(D^k(f)),\p_n^k, \ve^k_{\#,n})
\end{equation*}
commute, and every row and column is a chain complex. 
Applying the usual sign trick \cite[Example~10.4]{Rotman}, we define 
\begin{equation}\label{eq:sign-trick}
\varrho^k_n=(-1)^{n}\rho^k_n=\sum_{i=1}^k(-1)^{n+i-1}\ve^{i,k}_{\#,n},\qquad\text{and}\qquad \epsilon^k_n=(-1)^{n}\ve^k_{\#,n},
\end{equation}
to get (first quadrant) double complexes 
$(C_n(W^{k+1}(f)),\p_n^{k+1}, \varrho^{k+1}_n)$ with $n,k\geq0$:
\begin{equation}\label{eq:dc.w}
 \xymatrix{
\vdots \ar[d]_{\varrho^{k+1}_{0}}& \vdots\ar[d]_{\varrho^{k+1}_{1}} & & \vdots\ar[d]_{\varrho^{k+1}_{l}}\\
C_0(W^k(f))\ar[d]_{\varrho^k_{0}} & C_1(W^k(f))\ar[l]_>>>>>{\partial^k_1}\ar[d]_{\varrho^k_{1}} & \ar[l]_>>>>>{\partial^k_2}\cdots & C_l(W^k(f))\ar[l]_>>>>>{\partial^k_l}\ar[d]_{\varrho^k_{l}} & \ar[l]_<<<<{\partial^k_{l+1}}\cdots\\
\vdots \ar[d]_{\varrho^3_{0}}& \vdots\ar[d]_{\varrho^3_{1}} & & \vdots\ar[d]_{\varrho^3_{l}}\\
C_0(W^2(f))\ar[d]_{\varrho^2_{0}} & C_1(W^2(f))\ar[l]_<<<<{\partial^2_1}\ar[d]_{\varrho^2_{1}} & \ar[l]_>>>>>{\partial^2_2}\cdots & C_l(W^2(f))\ar[l]_>>>>>{\partial^2_l} \ar[d]_{\varrho^2_{l}}& \ar[l]_<<<<{\partial^2_{l+1}}\cdots\\
C_0(X) & C_1(X)\ar[l]_{\partial_1} & \ar[l]_<<<<{\partial_2}\cdots & C_l(X)\ar[l]_>>>>>{\partial_l} & \ar[l]_<<<<{\partial_{l+1}}\cdots\\
}
\end{equation}
and $(C^\alt_n(D^{k+1}(f)),\p_n^{k+1}, \epsilon^{k+1}_n)$ with $n,k\geq0$:
\begin{equation}\label{eq:dc}
 \xymatrix{
\vdots \ar[d]_{\epsilon^{k+1}_{0}}& \vdots\ar[d]_{\epsilon^{k+1}_{1}} & & \vdots\ar[d]_{\epsilon^{k+1}_{l}}\\
C_0^{\alt}(D^k(f))\ar[d]_{\epsilon^k_{0}} & C_1^{\alt}(D^k(f))\ar[l]_>>>>>{\partial^k_1}\ar[d]_{\epsilon^k_{1}} & \ar[l]_>>>>>{\partial^k_2}\cdots & C_l^{\alt}(D^k(f))\ar[l]_>>>>>{\partial^k_l}\ar[d]_{\epsilon^k_{l}} & \ar[l]_<<<<{\partial^k_{l+1}}\cdots\\
\vdots \ar[d]_{\epsilon^3_{0}}& \vdots\ar[d]_{\epsilon^3_{1}} & & \vdots\ar[d]_{\epsilon^3_{l}}\\
C_0^{\alt}(D^2(f))\ar[d]_{\epsilon^2_{0}} & C_1^{\alt}(D^2(f))\ar[l]_<<<<{\partial^2_1}\ar[d]_{\epsilon^2_{1}} & \ar[l]_>>>>>{\partial^2_2}\cdots & C_l^{\alt}(D^2(f))\ar[l]_>>>>>{\partial^2_l} \ar[d]_{\epsilon^2_{l}}& \ar[l]_<<<<{\partial^2_{l+1}}\cdots\\
C_0(X) & C_1(X)\ar[l]_{\partial_1} & \ar[l]_<<<<{\partial_2}\cdots & C_l(X)\ar[l]_>>>>>{\partial_l} & \ar[l]_<<<<{\partial_{l+1}}\cdots\\
}
\end{equation}

\begin{proposition}\label{prop:I.ss}
The first spectral sequences ${}^IE^r_{p.q}$ of the double complexes \eqref{eq:dc.w} and \eqref{eq:dc} collapse on the $p$-axis and $E^2_{n,0}\cong H_n(\mathrm{Tot}(C))\cong H_n(Y)$.
\end{proposition}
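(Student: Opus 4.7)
The plan is to read this as an essentially immediate consequence of Theorem~\ref{thm:mt}. In diagrams \eqref{eq:dc.w} and \eqref{eq:dc} the horizontal index $p$ records the chain degree and the vertical index $q$ records the $W^k$- or $D^k$-level (with $q=0$ the bottom row, consisting of chains on $X$); the horizontal differential $d'$ is the simplicial boundary $\partial$ and the vertical differential $d''$ is the signed face map $\varrho$ (resp.\ $\epsilon$). Because the first spectral sequence starts from $d^0=d''$, the $E^1$-page is the $q$-th vertical homology of each column.

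First I would recognise the columns. The $p$-th column of \eqref{eq:dc.w} is precisely the sequence of simplicial $p$-chains appearing in \eqref{eq:wcc} with the final $C_p(Y)$ term stripped off; the $p$-th column of \eqref{eq:dc} is the analogous alternating-chain sequence from \eqref{eq:rcy}, again with $C_p(Y)$ removed. By Theorem~\ref{thm:mt}, once $C_p(Y)$ is reattached via $f_{\#,p}$ each of these is an acyclic resolution of $C_p(Y)$; equivalently the column by itself has trivial homology in vertical degrees $q\geq 1$ and $H_0\cong C_p(X)/\im(\varrho^2_p)\cong C_p(Y)$. Hence ${}^IE^1_{p,q}=0$ for $q\geq 1$ and ${}^IE^1_{p,0}\cong C_p(Y)$, and the $E^1$-page lives entirely on the $p$-axis.

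Next I would identify $d^1$. It is the map on ${}^IE^1_{p,0}\cong C_p(Y)$ induced by the horizontal differential $\partial$, and since $f_\#$ is a chain map the relation $\partial\circ f_\#=f_\#\circ\partial$ forces this induced map to be the ordinary simplicial boundary $\partial_Y\colon C_p(Y)\to C_{p-1}(Y)$. Therefore ${}^IE^2_{p,0}\cong H_p(Y)$ and ${}^IE^2_{p,q}=0$ for $q\geq 1$. For $r\geq 2$ every differential $d^r$ leaving the $p$-axis lands in a zero group, so the sequence collapses at $E^2$ onto the $p$-axis and ${}^IE^\infty_{p,q}={}^IE^2_{p,q}$. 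Since the $E^\infty$-page is supported on $q=0$, the filtration \eqref{eq:filt.f.q} on $H_n(\mathrm{Tot}(C))$ has a unique non-zero subquotient, yielding $H_n(\mathrm{Tot}(C))\cong {}^IE^\infty_{n,0}\cong H_n(Y)$.

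There is no real obstacle here: all the substance is carried by Theorem~\ref{thm:mt}. The only small check to include in the write-up is the identification of the induced $d^1$ with the $Y$-boundary, which is a one-line diagram chase from $f_\#\circ\partial=\partial\circ f_\#$.
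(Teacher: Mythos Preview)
Your proposal is correct and follows essentially the same route as the paper's own proof: both invoke Theorem~\ref{thm:mt} to see that each column is exact above degree~$0$ with $H_0\cong C_p(Y)$, then identify the induced $d^1$ with the simplicial boundary on $C_\bullet(Y)$, obtaining ${}^IE^2_{p,0}\cong H_p(Y)$ and collapse onto the $p$-axis. Your write-up is slightly more explicit about why collapse yields $H_n(\mathrm{Tot}(C))\cong{}^IE^\infty_{n,0}$, whereas the paper simply cites the standard fact from Rotman, but the argument is the same.
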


\begin{proof}
We give the proof for the double complex \eqref{eq:dc}, the proof for the double complex \eqref{eq:dc.w} is completely analogous.
By \eqref{eq:I.SS}, the first page of the first spectral sequence is given by taking the homology with respect to the vertical differentials $\epsilon^q$ in diagram \eqref{eq:dc}, i.e.,
\begin{equation*}
{}^IE^1_{p.q}\cong H_q((C^\alt_p(D^{\bu+1}(f)),\epsilon^\bu_p)).
\end{equation*}
By Theorem~\ref{thm:mt} the chain complex \eqref{eq:rcy} is exact, hence we have
\begin{equation*}
{}^IE^1_{p.q}=\begin{cases}
                     C_p(X)/\im\ \ve^{2}_{\#,p}\cong C_p(Y) & \text{if $q=0$,}\\
                     0      & \text{if $q\neq0$.}
                    \end{cases}
\end{equation*}
The differential $(\p_q)_*\colon {}^IE^1_{p.0}=C_p(X)/\im\ \ve^{2}_{\#,p}\to C_{p-1}(X)/\im\ \ve^{2}_{\#,p-1}={}^IE^1_{p-1.0}$ corresponds under the isomorphism to the 
boundary map $\p_q\colon C_p(Y)\to C_{p-1}(Y)$, thus we have
\begin{equation*}
{}^IE^2_{p.q}=\begin{cases}
                     H_p(Y) & \text{if $q=0$,}\\
                     0      & \text{if $q\neq0$.}
                    \end{cases}
\end{equation*}
Therefore, the spectral sequence ${}^IE^r_{p.q}$ collapses on the $p$-axis and $H_n(\mathrm{Tot}(C))\cong H_n(Y)$.
\end{proof}

\begin{corollary}
\begin{enumerate}[1.]
\item The second spectral sequence ${}^{II}E^1_{p.q}$ of the double complex \eqref{eq:dc} converges to $H_\bu(Y)$, i.e.,
\begin{equation}\label{eq:gvzss}
{}^{II}E^1_{p,q}\cong H_q(W^{p+1}(f))\Longrightarrow H_{p+q}(Y),
\end{equation}
 with differentials given by $(\varrho^{p+1}_q)_*$.
 \item The second spectral sequence ${}^{II}E^1_{p.q}$ of the double complex \eqref{eq:dc} converges to $H_\bu(Y)$, i.e.,
\begin{equation}\label{eq:icss}
{}^{II}E^1_{p,q}\cong AH_q(D^{p+1}(f))\Longrightarrow H_{p+q}(Y),
\end{equation}
with differentials given by $(\epsilon^{p+1}_q)_*$.
\end{enumerate}
\end{corollary}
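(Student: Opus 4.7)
The plan is to invoke the general formalism of first-quadrant double complexes: both spectral sequences associated to such a complex converge to the homology of its total complex. Since Proposition~\ref{prop:I.ss} has already identified $H_\bullet(\mathrm{Tot}(C)) \cong H_\bullet(Y)$ for each of \eqref{eq:dc.w} and \eqref{eq:dc}, the corollary reduces to computing the ${}^{II}E^1$ page in each case and identifying the induced $d^1$ differential.

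For the double complex \eqref{eq:dc.w} I would fix the row index $p$, corresponding to fixing the level $p+1$ in the tower $W^\bullet(f)$, and read the horizontal differential: after the sign trick \eqref{eq:sign-trick} it is $(-1)^n \partial^{p+1}_n$, where $\partial^{p+1}_n$ is the usual simplicial boundary of $W^{p+1}(f)$. The sign is constant along the row, so it does not affect the resulting homology, giving ${}^{II}E^1_{p,q} = H'_q(C_{*,p}) \cong H_q(W^{p+1}(f))$. By the general recipe in \eqref{eq:II.SS}, the $d^1$ differential is induced in homology by the vertical differential $\varrho^{p+1}_q$, and together with the convergence from Proposition~\ref{prop:I.ss} this yields \eqref{eq:gvzss}.

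For \eqref{eq:dc} the argument is essentially the same, with $C^\alt_\bullet(D^{p+1}(f))$ playing the role of $C_\bullet(W^{p+1}(f))$. The one extra point I would verify is that the ordinary simplicial boundary preserves alternating chains, which is a one-line check: if $\sigma_\#(c) = \sign(\sigma)\, c$ then $\sigma_\#(\partial c) = \partial\sigma_\#(c) = \sign(\sigma)\,\partial c$, so $(C^\alt_\bullet(D^{p+1}(f)),\partial)$ is a genuine subcomplex whose homology is, by definition, $AH_q(D^{p+1}(f))$. Hence ${}^{II}E^1_{p,q} \cong AH_q(D^{p+1}(f))$ with $d^1 = (\epsilon^{p+1}_q)_*$, giving \eqref{eq:icss}.

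There is no serious obstacle in this last step; all the real content has already been absorbed into Proposition~\ref{prop:I.ss}, whose hypothesis on the exactness of the vertical complexes \eqref{eq:wcc} and \eqref{eq:rcy} is precisely the assertion of Theorem~\ref{thm:mt}. The only place one can stumble is purely bookkeeping: the index swap $C_{p,q}\leftrightarrow C_{q,p}$ built into the second spectral sequence \eqref{eq:II.SS}, combined with the sign twist \eqref{eq:sign-trick}. I would therefore open with a short paragraph fixing the convention that the horizontal index is the chain degree and the vertical index is the multiple-point level minus one, after which the identifications above are immediate and no further spectral-sequence machinery is required.
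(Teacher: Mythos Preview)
Your proposal is correct and follows essentially the same route as the paper's own proof: compute the ${}^{II}E^1$ page as the ordinary (respectively alternating) homology of the rows, then invoke Proposition~\ref{prop:I.ss} to identify the abutment with $H_\bullet(Y)$. One small bookkeeping slip: in the paper the sign trick \eqref{eq:sign-trick} twists the \emph{vertical} differentials $\varrho,\epsilon$, not the horizontal boundary $\partial$, so $\partial$ appears unmodified on each row; but as you yourself observe, a global sign along a row would not affect its homology, so this does not touch the argument.
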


\begin{proof}
 By \eqref{eq:II.SS}, the first page of the second spectral sequence is given by taking the homology with respect to the horizontal differentials $\p_q$ in diagram \eqref{eq:dc.w} (resp.\ in \eqref{eq:dc}),
so we get $H_q(W^{p+1}(f))$ (resp.\ $AH_q(D^{p+1}(f))$). It converges to the homology of the total complex, which by Proposition~\ref{prop:I.ss} is isomorphic to $H_\bu(Y)$.
\end{proof}

The second spectral sequence \eqref{eq:gvzss} of the double complex \eqref{eq:dc.w} is the \textit{GVZ Spectral Sequence}.
The second spectral sequence \eqref{eq:icss} of the double complex \eqref{eq:dc} is the {\it Image-Computing Spectral Sequence}.

\section{Finite simplicial maps}\label{fsm}

To complete the construction of the GVZSS and the ICSS it only remains to prove Theorem~\ref{thm:mt}. This is the aim of this section.

We suppose that $X$ and $Y$ are geometric simplicial complexes embedded in $\RR^N,\RR^P$ respectively, and that
$f:X\to Y$ is simplicial, surjective and finite-to-one. We begin by establishing some notation and proving some elementary lemmas. An {\it ordered simplex} is an affine homeomorphism
\begin{equation*}
(t_0,\ld,t_n)\mapsto \sum_it_iv_i
\end{equation*}
from the standard $n$-simplex into $X$ or $Y$ which forms part of its simplicial structure; the image of an ordered simplex
is a {\it geometric simplex}. Thus,  
an ordered simplex of $X$ is determined by a geometric $k$-simplex together with an ordering of its vertices. We will use the symbol $\Delta$ to denote an ordered simplex, 
and $|\Delta|$ to denote its image. When we need to speak of a geometric simplex without specifying an ordering of its vertices, we will use the symbol $\G$. 
We write $C_k(X)$ for the free abelian group generated by the ordered $k$-simplices of $X$. If $f$ is a simplicial map and $\D$ an ordered simplex of $X$, 
then $f(|\Delta|)$ is a geometric simplex of $Y$ and $f_\#(\D)$ an ordered simplex. Thus $f(|\Delta|)=|f_\#(\Delta)|$. If $\Delta=[v_0,\ld,v_n]$ then 
$|\mathring \D|$ denotes its {\it interior}:
\begin{equation*}
|\mathring\D|=\{\sum_it_iv_i:\sum_it_i=1, t_i>0, i=0,\ld, n\}.
\end{equation*}
The following properties of any simplicial complex $K$ are well known:
\begin{enumerate}
 \item Each point $x\in K$ lies in the interior of a unique geometric simplex of $K$, which we denote by $\G_x$. 
 \item We write $|\Delta|\leq|\Delta'|$ when $|\D|$ is a face of $|\D'|$, and note that
if $|\D'|\cap|\mathring \D|\neq 0$ then $|\D|\leq |\D'|$. For two simplices meet only along a common face of each, so
$\D=\overline{\mathring  \D}$ is a face of $\D'$.
\item For each point $x$,
\begin{equation*}
\st(x):=|\mathring\G_x|\cup\bigcup_{\G_x\leq|\Delta|}|\mathring\Delta|
\end{equation*}
is an open neighbourhood of $x$. For suppose that $(x_n)\to x$. By local finiteness of $K$ we can suppose that
all $x_n$ lie in the interior of one simplex $\Delta$. Then $x\in\Delta\cap\mathring \G_x,$ so by the preceding statement,
$\G_x\leq\D$, and thus all the $x_n$ lie in $\st(x)$.
\end{enumerate}

\begin{lemma}\label{basic1}
$f$ maps each geometric $n$-simplex of $X$ isomorphically to an $n$-simplex of $Y$.
\end{lemma}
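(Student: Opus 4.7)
The plan is to reduce the statement to an affine-algebraic fact about a single simplex, and then use the finiteness hypothesis on $f$ to rule out any drop in dimension. Fix a geometric $n$-simplex $|\Delta|$ of $X$ with vertices $v_0,\ldots,v_n$. Since $f$ is simplicial, its restriction to $|\Delta|$ is the unique affine map $\sum_i t_iv_i\mapsto \sum_i t_if(v_i)$, and the image $f(|\Delta|)$ is the convex hull of $\{f(v_0),\ldots,f(v_n)\}$, which is automatically a geometric simplex of $Y$ of some dimension $m\leq n$. The content of the lemma is thus just that $m=n$, i.e.\ that $f(v_0),\ldots,f(v_n)$ are affinely independent, for then $f|_{|\Delta|}$ is an affine bijection of $n$-simplices and hence a simplicial isomorphism.

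Suppose for contradiction that $m<n$. Then $f|_{|\Delta|}\colon |\Delta|\to f(|\Delta|)$ is an affine surjection from an $n$-dimensional simplex onto an $m$-dimensional one, so each of its fibres is the intersection of $|\Delta|$ with an affine subspace of dimension $n-m\geq 1$. For any point $y$ in the relative interior of $f(|\Delta|)$, this intersection contains a non-empty relatively open piece of the corresponding affine subspace, and in particular is infinite. But then $f^{-1}(y)\supset (f|_{|\Delta|})^{-1}(y)$ is infinite too, contradicting the hypothesis that $f$ has finite fibres. Hence $m=n$, as required.

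I do not expect any real obstacle in this argument; the only step that deserves a moment's care is the claim that fibres over interior points of a dimension-dropping affine map are infinite, which is immediate because such a fibre is a non-empty open subset of a positive-dimensional affine space. Everything else — that $f|_{|\Delta|}$ is the expected affine map, that its image is a geometric simplex, and that an affine bijection between $n$-simplices is a simplicial isomorphism — is built into the definition of a simplicial map between geometric simplicial complexes.
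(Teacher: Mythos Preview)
Your proof is correct and follows essentially the same approach as the paper: both use the affine formula $f(\sum_i t_iv_i)=\sum_i t_if(v_i)$ and argue that affine dependence of the image vertices would force the restricted map to have infinite fibres, contradicting finiteness. Your version merely spells out the fibre argument in slightly more detail.
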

\begin{proof} By definition of simplicial map,  on the simplex $\Delta:=[v_0,\ld,v_n]$, we have
\begin{equation*}
f(\sum_it_iv_i)=\sum_it_if(v_i).
\end{equation*}
If the vertices $f(v_0),\ld, f(v_n)$ were affinely dependent then this map would not be finite to one. Thus
$\Delta':=[f(v_0),\ld, f(v_n)]$ is an $n$-simplex of $Y$ and $f:|\Delta|\to |\Delta'|$ is an isomorphism.
\end{proof}

\begin{lemma}\label{cp} 
If $f(x_1)=f(x_2)$ then the barycentric coordinates of $x_1$ in $\G_{x_1}$, with respect to a suitable ordering of the vertices, are the same as those of $x_2$ in $\G_{x_2}$.
\end{lemma}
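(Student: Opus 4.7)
The plan is to use Lemma \ref{basic1} together with property (1) of simplicial complexes (that each point lies in the interior of a unique geometric simplex) to identify a common target simplex in $Y$, and then exploit affine independence of its vertices to equate barycentric coordinates.

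First I would set $y := f(x_1) = f(x_2)$ and let $\G_y$ denote the unique simplex of $Y$ containing $y$ in its interior. Lemma \ref{basic1} tells us that $f$ restricts to an affine isomorphism from $\G_{x_1}$ onto some simplex $f(\G_{x_1})$ of $Y$ of the same dimension, and similarly for $\G_{x_2}$. Since $x_1$ lies in the interior of $\G_{x_1}$ and $f$ restricted to $\G_{x_1}$ is an affine homeomorphism, $y = f(x_1)$ lies in the interior of $f(\G_{x_1})$. The uniqueness property (1) then forces $f(\G_{x_1}) = \G_y$, and the same argument gives $f(\G_{x_2}) = \G_y$. In particular, $\G_{x_1}$ and $\G_{x_2}$ have the same dimension, say $n$.

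Next, write $\G_{x_1} = [v_0,\ldots,v_n]$ and $\G_{x_2} = [w_0,\ldots,w_n]$. Since both $f|_{\G_{x_1}}$ and $f|_{\G_{x_2}}$ are affine isomorphisms onto $\G_y$, each of $\{f(v_0),\ldots,f(v_n)\}$ and $\{f(w_0),\ldots,f(w_n)\}$ is precisely the vertex set of $\G_y$. I can therefore reorder the vertices of $\G_{x_2}$ so that $f(v_i) = f(w_i)$ for every $i=0,\ldots,n$; this is the ``suitable ordering'' required by the statement.

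Finally, expressing $x_1 = \sum_i t_i v_i$ and $x_2 = \sum_i s_i w_i$ with $\sum t_i = \sum s_i = 1$ and all coordinates positive, the simplicial identity of Lemma~\ref{basic1} gives
\begin{equation*}
\sum_i t_i f(v_i) = f(x_1) = f(x_2) = \sum_i s_i f(w_i) = \sum_i s_i f(v_i).
\end{equation*}
The affine independence of the vertices $f(v_0),\ldots,f(v_n)$ of $\G_y$ then forces $t_i = s_i$ for all $i$, which is the desired conclusion. There is no real obstacle here; the only subtle point is the use of the uniqueness statement (1) to identify $f(\G_{x_1})$ and $f(\G_{x_2})$ with the same simplex $\G_y$, after which the matching of barycentric coordinates is immediate from affine independence.
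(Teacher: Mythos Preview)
Your proof is correct and follows essentially the same approach as the paper: both arguments pass through the common image simplex $\G_y$, order the vertices of $\G_{x_1}$ and $\G_{x_2}$ compatibly with a fixed ordering of the vertices of $\G_y$, and then use that $f$ is affine on each simplex together with the uniqueness of barycentric coordinates. Your version is in fact more explicit than the paper's, since you spell out why $f(\G_{x_i})=\G_y$ (via property~(1) and the fact that an affine isomorphism preserves interiors) and invoke affine independence directly, whereas the paper compresses this into the single phrase ``the barycentric coordinates of $x_i$ in $\G_{x_i}$ are the same as those of $y$ in $\G_y$.''
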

\begin{proof} Write $y:=f(x_1)=f(x_2)$. If $\G_y=|[w_1,\ld,w_n]|$ then for $i=1, 2$ it is possible to order the vertices $v^{(i)}_j$ of $\G_{x_i}$ so that $f(v^{(i)}_j)=w_j$. 
Since $f$ is linear on $\G_{x_i}$, the barycentric coordinates of 
$x_i$ in $\G_{x_i}$ are the same as those of $y$ in $\G_y$. 
\end{proof}

\begin{lemma}\label{basic2}
$f_\#:C_n(X)\to C_n(Y)$ is surjective.
\end{lemma}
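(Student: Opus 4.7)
The plan is to reduce the surjectivity of $f_\#$ to showing that every ordered $n$-simplex $\Delta'=[w_0,\ldots,w_n]$ of $Y$ equals $f_\#(\Delta)$ for some ordered $n$-simplex $\Delta$ of $X$. Since $C_n(Y)$ is the free abelian group on ordered $n$-simplices of $Y$, this is enough.

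First I would pick a point $y$ in the interior $|\mathring{\Delta'}|$. Since $f\colon X\to Y$ is surjective, there exists $x\in X$ with $f(x)=y$. Apply Lemma~\ref{basic1} to the geometric simplex $\G_x$ containing $x$ in its interior: $f$ maps $\G_x$ isomorphically to a geometric simplex $f(\G_x)$ of $Y$. Since $y\in |\mathring{\Delta'}|\cap f(\G_x)$, property~(2) of simplicial complexes recalled just before Lemma~\ref{basic1} gives $|\Delta'|\leq f(\G_x)$. Because $f$ restricts to a linear isomorphism $|\G_x|\to |f(\G_x)|$ carrying faces to faces, there is a unique face $\G$ of $\G_x$ of dimension $n$ with $f(\G)=|\Delta'|$ as geometric simplices.

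Next I would promote $\G$ to an ordered simplex. Since $f|_\G$ is a bijection on vertices from $\G$ to $|\Delta'|$, I order the vertices $v_0,\ldots,v_n$ of $\G$ by the rule $f(v_i)=w_i$; this is forced by the ordering of $\Delta'$. Call the resulting ordered $n$-simplex $\Delta$. Then $f_\#(\Delta)=[f(v_0),\ldots,f(v_n)]=[w_0,\ldots,w_n]=\Delta'$. Hence every generator of $C_n(Y)$ lies in the image of $f_\#$, so $f_\#\colon C_n(X)\to C_n(Y)$ is surjective.

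I do not foresee a real obstacle: the only point requiring care is confirming that the face $\G$ has the right dimension, but this is automatic since $\Delta'$ has $n+1$ vertices and $f|_{\G_x}$ is a vertex-preserving linear isomorphism, so the preimage of an $n$-dimensional face is an $n$-dimensional face. The finiteness of $f$ is not used here beyond what was already needed to guarantee that $f$ carries $n$-simplices to genuine $n$-simplices (Lemma~\ref{basic1}).
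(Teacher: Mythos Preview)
Your proof is correct and follows essentially the same approach as the paper: pick an interior point $y$ of the target simplex, lift it to $x\in X$, and use Lemma~\ref{basic1} on $\G_x$. The only difference is that the paper observes directly that $f(\G_x)=|\Delta'|$ (since $x\in\mathring\G_x$ forces $y=f(x)\in\mathring{f(\G_x)}$, and then uniqueness of the open simplex containing $y$ gives equality), whereas you take the slightly longer route of first deducing $|\Delta'|\leq f(\G_x)$ via property~(2) and then passing to a face of $\G_x$; your extra step is harmless but unnecessary.
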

\begin{proof}
Let $\G'$ be a $k$-simplex in $Y$, and $y\in\mathring\G'$.
Pick $x\in X$ such that $f(x)=y$. By Lemma \ref{basic1}, $f:\G_x\to\G'$ is an isomorphism.  It follows that for each $\Delta'$ such that $\G'=|\Delta'|$, 
there is an ordered $k$-simplex $\Delta$ in $X$ such that $f_\#(\Delta)=\Delta',$
so that $f_\#$ is surjective.
\end{proof}

\begin{remark}
The statement of Lemma \ref{basic2} is false in singular homology when $n>0$: for example, the figure $8$ is the image of the circle $S^1$ under a generically 1-to-1 immersion with one double point. 
A 1-simplex in the figure $8$ which turns a corner at the vertex of the $8$ is not the image of any 1-simplex in $X$.
\end{remark}
 
\subsection*{Triangulating $W^k(f)$}
 
Recall first that since $X\subset\RR^N$, we have $W_k(f)\subset (\RR^{N})^k$.
Let $y\in Y$. Suppose $\G_y$ is an $n$-simplex. Let $\x\in W^k(f)$ such that $f^{(k)}(\x)=y$, i.~e., $f(\pi_\ell(\x))=y$ for $\ell=1,\dots,k$.
Picking an order of the vertices of $\G_y$, say $\G_y=|[w_0,\ld,w_n]|$, determines an order for the vertices of each simplex $\G_{\pi_\ell(\x)}$
\begin{equation*}
\G_{\pi_\ell(\x)}=\l|\l[v^{(\ell)}_0,\ld,v^{(\ell)}_n\r]\r|,
\end{equation*}
where $f(v^{(\ell)}_j)=w_j$ for $j=0,\ld,n$.  Then the simplex
\begin{equation}\label{biggest}
\G^{(k)}_{\x}:=\l|\l[\bigl(v^{(1)}_0,\ld,v^{(k)}_0\bigr), \ld, \bigl(v^{(1)}_n,\ld,v^{(k)}_n\bigr)\r]\r|
\end{equation}
is contained in $W^k(f)$ since all its vertices are. We also denote the geometric simplex \eqref{biggest} by
\begin{equation*}
\left(\G_{\pi_1(\x)}\times\cd\times \G_{\pi_k(\x)}\right)/Y.
\end{equation*}

We claim that $\x$ is in the interior of $\G^{(k)}_{\x}$. 
Write $y$ in barycentric coordinates $y=\sum_j t_jw_j$, since $y$ is in the interior of $\G_y$, $t_j>0$ for $j=0,\dots,n$. By Lemma~\ref{cp} the barycentric coordinates of each point $\pi_\ell(\x)$
in $\G_{\pi_\ell(\x)}$ with the vertices ordered as described, are the same as those of $y$ in $\G_y$. Thus
\begin{equation*}
\x=\sum_j t_j (v_j^{(1)},\dots,v_j^{(k)})
\end{equation*}
lies in the interior of $\G^{(k)}_{\x}$ since $t_j>0$ for $j=0,\dots,n$.

\begin{lemma}\label{lem:unique.geom.simp}
Let $\x\in W^k(f)$. The geometric simplex $\G^{(k)}_{\mathbf{x}}$ given in \eqref{biggest} is the unique geometric simplex of $W^k(f)$ containing $\mathbf{x}$ in its interior,
\end{lemma}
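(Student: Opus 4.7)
The plan is to reduce uniqueness in $W^k(f)$ to the corresponding uniqueness statement in $X$ (property (1) of a simplicial complex, already recalled in the paper) by projecting down via the $\pi_\ell$.

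Assume $\x \in W^k(f)$ and suppose that $\x$ also lies in $|\mathring{\G^{(k)}_{\x'}}|$ for some $\x' \in W^k(f)$. I want to show that $\G^{(k)}_{\x'} = \G^{(k)}_{\x}$ as geometric simplices. The key observation is that $\pi_\ell$ is the restriction of the $\ell$'th coordinate projection $(\RR^N)^k \to \RR^N$, so it is affine on each simplex of $W^k(f)$. If the vertices of $\G^{(k)}_{\x'}$ are written as tuples $(v^{'(1)}_j,\ldots,v^{'(k)}_j)$ for $j = 0,\ldots,n$ as in \eqref{biggest}, then $\pi_\ell$ sends them to $v^{'(\ell)}_0,\ldots,v^{'(\ell)}_n$, the vertices of $\G_{\pi_\ell(\x')}$. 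Since these are affinely independent, $\pi_\ell$ restricts to an affine isomorphism $\G^{(k)}_{\x'} \to \G_{\pi_\ell(\x')}$, hence carries interior to interior. Therefore $\pi_\ell(\x) \in |\mathring{\G_{\pi_\ell(\x')}}|$, and by uniqueness in $X$, $\G_{\pi_\ell(\x')} = \G_{\pi_\ell(\x)}$ for each $\ell = 1,\ldots,k$.

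To finish, I need to match the vertex pairings. Applying $f$ and using Lemma~\ref{basic1}, I push the above equality into $Y$: $y := f^{(k)}(\x) = f(\pi_\ell(\x)) \in |\mathring{f(\G_{\pi_\ell(\x')})}| = |\mathring{\G_{f^{(k)}(\x')}}|$, so $\G_y = \G_{f^{(k)}(\x')}$. The two orderings of the vertices of this common simplex $\G_y$ used to construct $\G^{(k)}_{\x}$ and $\G^{(k)}_{\x'}$ differ by some permutation $\sigma \in S_{n+1}$. Because $f$ is injective on the vertices of each $\G_{\pi_\ell(\x')} = \G_{\pi_\ell(\x)}$ (Lemma~\ref{basic1}), the induced ordering of vertices of $\G_{\pi_\ell(\x)}$ compatible with $w_j = f(v^{(\ell)}_j)$, versus compatible with $w'_j = f(v^{'(\ell)}_j)$, must be related by the \emph{same} permutation $\sigma$ in every coordinate $\ell$. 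Consequently $v^{'(\ell)}_j = v^{(\ell)}_{\sigma(j)}$ for all $\ell$ and $j$, so the vertex tuples $(v^{'(1)}_j,\ldots,v^{'(k)}_j)$ are simply a reordering of the $(v^{(1)}_j,\ldots,v^{(k)}_j)$. As geometric (unordered) simplices, $\G^{(k)}_{\x'} = \G^{(k)}_{\x}$.

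I do not anticipate a serious obstacle: the real content is the first step, that $\pi_\ell$ is an affine isomorphism on $\G^{(k)}_{\x'}$, which reduces everything to the uniqueness that already holds in $X$. The only slightly delicate point is the second step — making sure that the permutation $\sigma$ relating the two orderings of $\G_y$ acts uniformly across the $k$ coordinates — but this is forced by the fact that in the construction \eqref{biggest} the vertex in the $\ell$'th slot is determined by its image under $f$, so once $y$ and its ordering are fixed, the whole tuple is rigid.
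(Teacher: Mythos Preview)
Your proof is correct and follows the same core idea as the paper's: project via $\pi_\ell$, observe that $\pi_\ell$ carries the interior of $\G^{(k)}_{\x'}$ into the interior of $\G_{\pi_\ell(\x')}$, and then invoke uniqueness in $X$ to conclude $\G_{\pi_\ell(\x')} = \G_{\pi_\ell(\x)}$ for every $\ell$.

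The only difference is your second step. The paper simply writes ``It follows that $\G^{(k)}_{\mathbf{x'}}=\G^{(k)}_{\mathbf{x}}$'' once the component simplices agree, because the alternative notation $(\G_{\pi_1(\x)}\times\cdots\times\G_{\pi_k(\x)})/Y$ already makes clear that the \emph{geometric} simplex depends only on the unordered simplices $\G_{\pi_\ell(\x)}$, not on any chosen ordering of the vertices of $\G_y$: for each vertex $w$ of $\G_y$ the corresponding vertex of $\G^{(k)}_\x$ is the tuple of the unique preimages of $w$ in the $\G_{\pi_\ell(\x)}$. Your argument with the permutation $\sigma$ spells this out explicitly and is perfectly fine, but it is not needed.
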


\begin{proof}
Suppose $\mathbf{x'}$ lies in the interior of the $n$-simplex $\G^{(k)}_{\mathbf x}$. Then, with the notation of the construction,
\begin{equation*}
\mathbf x'= \sum_jt_j'\left(v^{(1)}_{j},\ld,  v^{(k)}_{j}\right),
\end{equation*}
for some $t_j'$, with $t_j'>0$ for $j=0,\ld,n$ and $\sum_jt_j'=1.$
Then $\pi_\ell(\x')=\sum_jt_j'v_{j}^{(\ell)}, $ and $\pi_\ell(\mathbf{x'})\in\mathring\G_{\pi_\ell(\x)}$,
so that $\G_{\pi_\ell(\mathbf{x'})}=\G_{\pi_\ell(\mathbf{x})}$. It follows that $\G^{(k)}_{\mathbf{x'}}=\G^{(k)}_{\mathbf{x}}$. 
Thus each point of $W^k(f)$ lies in the interior of a unique geometric simplex
\end{proof}

By Lemma~\ref{lem:unique.geom.simp} the notation $\G^{(k)}_{\mathbf x}$ is consistent with the notation $\G_x$ for the unique geometric simplex of $X$ containing $x$ in its interior.
Note that the ``$k$" in $\G^{(k)}_{\mathbf x}$ refers to the fact that we are speaking of a simplex in $D^k(f)$; it is not the dimension of the simplex.

\begin{proposition}\label{prop:sc.W}
The forgoing construction gives $W^k(f)$ the structure of a simplicial complex.
\end{proposition}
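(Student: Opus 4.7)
The plan is to verify the defining axioms of a geometric simplicial complex for the collection consisting of all simplices of the form $\G^{(k)}_{\mathbf x}$ together with their faces. Lemma \ref{lem:unique.geom.simp} already supplies the covering property: each point of $W^k(f)$ lies in the interior of a unique such simplex.

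Closure under faces should be immediate from the construction. A face of $\G^{(k)}_{\mathbf x}$ is spanned by vertices $(v^{(1)}_j,\ld,v^{(k)}_j)$ for $j$ in some subset $J\subseteq\{0,\ld,n\}$; under the isomorphism $f^{(k)}:\G^{(k)}_{\mathbf x}\to\G_y$ given by Lemma \ref{basic1}, this corresponds to the face of $\G_y$ spanned by $\{w_j:j\in J\}$, while each $\pi_\ell$ maps it onto a face of $\G_{\pi_\ell(\mathbf x)}$. Hence the face is itself of the form $\G^{(k)}_{\mathbf x'}$ for any $\mathbf x'$ in its interior.

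The core step is the intersection property. I would show that for any $\mathbf x,\mathbf x'$, the simplices $\G^{(k)}_{\mathbf x}$ and $\G^{(k)}_{\mathbf x'}$ meet along a common face (possibly empty), by reading the intersection componentwise. For each $\ell$, the set $F_\ell:=\G_{\pi_\ell(\mathbf x)}\cap\G_{\pi_\ell(\mathbf x')}$ is either empty or a common face of these two simplices in $X$, and by Lemma \ref{basic1} $f|_{F_\ell}$ is an affine isomorphism onto a face of $\G_y\cap\G_{y'}$, where $y=f^{(k)}(\mathbf x)$ and $y'=f^{(k)}(\mathbf x')$. Setting $F:=\bigcap_\ell f(F_\ell)$, I expect $\G^{(k)}_{\mathbf x}\cap\G^{(k)}_{\mathbf x'}$ to equal the fiber product over $F$ of the preimages $(f|_{F_\ell})^{-1}(F)$, giving a simplex $\G^{(k)}_{\mathbf x''}$ that is a face of each original simplex by the construction of faces above. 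Local finiteness will then transfer from local finiteness of $X$ together with the finite-to-one property of $f$, since each $\G^{(k)}_{\mathbf x}$ is pinned down by its $k$-tuple of component simplices in $X$.

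The main obstacle will be showing that the candidate face really exhausts the full geometric intersection inside $(\RR^N)^k$, and not just sits inside it. The containment of the fiber product in the intersection is direct, but the reverse inclusion requires using Lemma \ref{basic1} to conclude that any common point $(p_1,\ld,p_k)$ of the two simplices has each $p_\ell$ lying in $F_\ell$ with common image $f(p_\ell)\in\bigcap_\ell f(F_\ell)=F$, so that the unique lifts $(f|_{F_\ell})^{-1}(f(p_\ell))$ fit into the fiber product description.
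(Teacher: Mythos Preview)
Your argument is correct, but it takes a longer route than the paper's. The paper's proof is essentially two sentences: closure under faces is immediate (omitting a vertex of $\G^{(k)}_{\mathbf x}$ yields another simplex of the same form), and Lemma~\ref{lem:unique.geom.simp} already shows that every point of $W^k(f)$ lies in the interior of a \emph{unique} such simplex. These two properties together are a standard characterization of a geometric simplicial complex; the intersection axiom follows formally, since any $p\in\G^{(k)}_{\mathbf x}\cap\G^{(k)}_{\mathbf x'}$ lies in the interior of some face of each, and by uniqueness these faces coincide, forcing $\G^{(k)}_{\mathbf x}\cap\G^{(k)}_{\mathbf x'}$ to be the face spanned by the common vertices.

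You invoke Lemma~\ref{lem:unique.geom.simp} only for the covering property and then separately verify the intersection axiom by an explicit componentwise fiber-product computation. That computation is correct and has the advantage of producing a concrete formula for the intersection (namely the fibred product over $F=\bigcap_\ell f(F_\ell)$), but it duplicates work already done: once you have uniqueness of interiors, the intersection axiom is automatic. The local-finiteness check is likewise not needed for the bare statement, though it is a reasonable observation.
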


\begin{proof}
Omitting a vertex of a simplex $\G^{(k)}$ in $W^k(f)$ we obtain again a geometric simplex in $W^k(f)$, and by Lemma~\ref{lem:unique.geom.simp} 
for each point $\mathbf{x}\in W^k(f)$, $\G^{(k)}_{\x}$ is the unique geometric simplex containing $\x$ in its interior.
\end{proof}

We refer to the triangulation of $W^k(f)$ just described as the {\it triangulation associated to the simplicial map $f$}.

\subsection*{Triangulating $D^k(f)$}
Let $y\in Y$, suppose that $\G_y$ is an $n$-simplex, and suppose $\{x_1,\ld, x_k\}\subset f^{-1}(y)$ with $x_i\neq x_{i'}$ if $i\neq i'$. 
Thus, the point $\x=(x_1,\ld,x_k)$ is a point in $D^k_S(f)$.
By Lemma \ref{basic1},  the preimage of $\G_y$ contains the geometric
$n$-simplices $\G_{x_i}$, each of which is mapped isomorphically to $\G_y$.  Suppose that 
$\G_y=|[w_0,\ld,w_n]|$, and that $y=\sum_jt_jw_j$. Let $v^{(i)}_j$ be the unique vertex of $\G_{x_i}$ such that
$f(v^{(i)}_j)=w_j$, for $i=1,\ld,k$ and $j=0,\ld,n$. Note that it may happen that $v_j^{(i)}=v_j^{(i')}$ for some $i\neq i'$. 
By Lemma \ref{cp}, the barycentric coordinates of each point $x_i$ in $\G_{x_i}$, with the vertices ordered as described, are the same as those of $y$ in $\G_y$. Thus, 
the geometric $n$-simplex
\begin{equation}\label{1big}
\left|\left[\lb v^{(1)}_0,\ld, v^{(k)}_0\rb,\ld, \lb v^{(1)}_n,\ld, v^{(k)}_n\rb\right]\right|
\end{equation}
is contained in $D^k(f)$, and
\begin{equation*}
\x=(x_1,\ld, x_r)=\sum_jt_j\lb v^{(1)}_j,\ld, v^{(k)}_j\rb
\end{equation*}
lies in its interior: all of the $t_{j_i}$ are strictly positive since $y\in\mathring\G_{y}$.

As before, we denote the geometric simplex \eqref{1big} by $\G^{(k)}_{\x}$ or by
\begin{equation*}
\lb \G_{x_1}\times\cd\times\G_{x_k}\rb/Y.
\end{equation*}
Its interior is contained in $D^k_S(f)$, but some of its faces may not be. When $k=2$, this is more commonly written as $\G_{x_1}\times_Y\G_{x_2}$. 

Let us consider now more general points $\x\in D^k(f)$, where there may be repetitions in the $k$ components. Such points appear in the closure of $D^k_S(f)$. 
We claim that the simplex \eqref{biggest} constructed for $W^k(f)$ before
is contained in $D^k(f)$, and contains $\x$ in its interior. 

To see this, observe that by definition of $D^k(f)$ as the closure of $D^k_S(f)$, there is a sequence  
$\l(x^{(1)}_{n},\ld,x^{(k)}_n\r)_{n\in\NN}$ in $D^{k}_S(f)$ converging to $\mathbf{x}$. By local finiteness of the triangulation of $X$, 
by passing to a subsequence we may suppose each individual sequence $(x^{(\ell)}_{n})_{n\in\NN}$ is contained in the interior of a single simplex, which we denote by $\G^{(\ell)}$. 
Note that $\G^{(\ell)}\neq \G^{(\ell')}$ if $\ell\neq \ell'$ since $f$ is one-to-one on each simplex. Then by the argument of the first part of the construction, 
$$\l(\G^{(1)}\times\cd\times \G^{(k)}\r)/Y$$
is contained in $D^k(f)$. Since $\G_{\pi_\ell(\x)}\leq \G^{(\ell)}$ for $\ell=1,\ld,k$, 
it follows that the simplex \eqref{biggest} is contained in $D^k(f)$. Again, $\x$ is contained
in its interior, since the barycentric coordinates of each point $\pi_\ell(\x)$ in $|[v^{(\ell)}_0,\ld,v^{(\ell)}_n]|$ are equal to those of $y$ in $[w_0,\ld, w_n]$, which are all strictly positive.
 
By Lemma~\ref{lem:unique.geom.simp} $\G^{(k)}_{\mathbf{x}}$ is the unique geometric simplex of $D^k(f)$ containing $\mathbf{x}$ in its interior.

\paragraph{\textbf{Notation}} As described in the construction, each of the simplices $\D_\ell:=[v^{(\ell)}_0,\ld,v_n^{(\ell)}]$ appearing in \eqref{biggest} satisfies
$f_\#(\D_\ell)=[w_0,\ld,w_n]$. It will be useful to refer to the ordered $n$ simplex of $D^k(f)$ appearing in 
\eqref{biggest} as $(\D_1\times\cd\times\D_k)/Y.$

\begin{corollary}\label{simplcomp} The forgoing construction gives $D^k(f)$ the structure of a simplicial complex. 
In fact, it is a simplicial subcomplex of $W^k(f)$ with the triangulation associated to the simplicial map $f$.
\end{corollary}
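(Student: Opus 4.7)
My plan is to exploit that the simplices constructed for $D^k(f)$ are literally among the simplices of $W^k(f)$ given by Proposition~\ref{prop:sc.W}. Once this is verified, showing that $D^k(f)$ inherits the structure of a simplicial subcomplex reduces to checking closure under taking faces.

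I would assemble the simplices produced so far into a family $\mathcal{S} = \{\G^{(k)}_{\x} : \x \in D^k(f)\}$. Each element of $\mathcal{S}$ has the form $(\Delta_1 \times \cdots \times \Delta_k)/Y$ where the $\Delta_\ell$ are ordered simplices of $X$ with a common image under $f_\#$; these are among the simplices of $W^k(f)$ in the triangulation of Proposition~\ref{prop:sc.W}. The union of $\mathcal{S}$ equals $D^k(f)$: one containment is automatic since $\x \in \G^{(k)}_{\x}$, and the other is precisely what the preceding construction establishes.

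The heart of the argument is closure under faces. Given $\G^{(k)} \in \mathcal{S}$ and the face $\tau$ obtained by discarding a vertex $(v_j^{(1)},\ldots,v_j^{(k)})$, one can write $\tau = (\Delta_1' \times \cdots \times \Delta_k')/Y$ where $\Delta_\ell'$ is the corresponding face of $\Delta_\ell$. Since $\tau \subset \G^{(k)} \subset D^k(f)$, any interior point $\x'$ of $\tau$ lies in $D^k(f)$; the uniqueness statement of Lemma~\ref{lem:unique.geom.simp} then forces $\G^{(k)}_{\x'} = \tau$, and therefore $\tau \in \mathcal{S}$.

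With $\mathcal{S}$ a face-closed subcollection of the simplices of $W^k(f)$ whose union is $D^k(f)$, the simplicial-subcomplex claim follows, and $D^k(f)$ automatically acquires the induced simplicial complex structure (the intersection axiom is inherited from $W^k(f)$). The one place where care is needed is the face-closure step, which is also the only point at which the definition of $D^k(f)$ as the closure of $D^k_S(f)$ really enters: one needs that a face of $\G^{(k)}_{\x}$ has its interior inside $D^k(f)$, not merely inside $W^k(f)$.
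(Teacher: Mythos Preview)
Your proof is correct and follows essentially the same approach as the paper's: both observe that the simplices constructed for $D^k(f)$ are among those of $W^k(f)$, and that face-closure holds because a face of $\G^{(k)}$ lies in its topological closure, hence in the closed set $D^k(f)$. Your version is slightly more explicit in invoking Lemma~\ref{lem:unique.geom.simp} to identify the face with $\G^{(k)}_{\x'}$ for an interior point $\x'$, whereas the paper leaves this step implicit.
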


\begin{proof} 
It is obvious that the simplices constructed in $D^k(f)$ are also simplices of $W^k(f)$. To prove that $D^k(f)$ is a simplicial subcomplex of $W^k(f)$,
it is necessary to show that if we omit any vertex of a simplex of $D^k(f)$ then the resulting simplex is still a simplex of $D^k(f)$.
This is also obvious, since by omitting a vertex of a simplex $\G^{(k)}$ we obtain a geometric simplex in the topological closure of $\G^{(k)}$.
\end{proof}

We also refer to this triangulation of $D^k(f)$ as the {\it triangulation associated to the simplicial map $f$}.    

\begin{proposition}\label{prop:eps.triang}
Let $Z^k$ be $W^k(f)$ or $D^k(f)$.
For each $k$ and $1\leq i\leq k$ the map $\ve^{i,k}\colon Z^k\to Z^{k-1}$ is a simplicial map, with respect to the triangulations associated to the simplicial map $f$.
\end{proposition}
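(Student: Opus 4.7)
The approach is straightforward: note that $\ve^{i,k}$ is the restriction to $Z^k$ of the linear projection $(\RR^N)^k\to(\RR^N)^{k-1}$ that forgets the $i$-th factor, hence it is automatically affine on each geometric simplex of $Z^k$. So to see that it is simplicial with respect to the triangulations associated to $f$, it suffices to verify (a) that $\ve^{i,k}$ sends $Z^k$ into $Z^{k-1}$, (b) that it sends vertices of $Z^k$ to vertices of $Z^{k-1}$, and (c) that it sends each geometric simplex of $Z^k$ onto a geometric simplex of $Z^{k-1}$.

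For (a) with $Z=W$, the inclusion $\ve^{i,k}(W^k(f))\subseteq W^{k-1}(f)$ is immediate: dropping one of the equations $f(x_1)=\cdots=f(x_k)$ leaves the others satisfied. For $Z=D$, I would invoke the definition $D^k(f)=\overline{D^k_S(f)}$ together with the continuity of $\ve^{i,k}$: given $\mathbf{x}\in D^k(f)$, choose a sequence $\mathbf{x}_n\in D^k_S(f)$ with $\mathbf{x}_n\to\mathbf{x}$; since the $k$ components of each $\mathbf{x}_n$ are pairwise distinct, so are the $k-1$ components of $\ve^{i,k}(\mathbf{x}_n)$, whence $\ve^{i,k}(\mathbf{x}_n)\in D^{k-1}_S(f)$, and passing to the limit gives $\ve^{i,k}(\mathbf{x})\in D^{k-1}(f)$. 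This is the one step that uses anything non-trivial about $D^k(f)$, and will be the main obstacle.

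For (b) and (c), I would read off the conclusion directly from the explicit construction of $\G^{(k)}_{\mathbf{x}}$. A vertex of $Z^k$ has the form $(v^{(1)},\dots,v^{(k)})$ with each $v^{(\ell)}$ a vertex of $X$ and all of them mapping to a common vertex of $Y$; its image $(v^{(1)},\dots,\widehat{v^{(i)}},\dots,v^{(k)})$ has the same form and lies in $Z^{k-1}$ by (a), so is a vertex there. For an arbitrary simplex $\G^{(k)}_{\mathbf{x}}$ with vertices $(v^{(1)}_j,\dots,v^{(k)}_j)$ for $j=0,\dots,n$, the vertex images $(v^{(1)}_j,\dots,\widehat{v^{(i)}_j},\dots,v^{(k)}_j)$ are, by inspection, precisely the vertices of $\G^{(k-1)}_{\ve^{i,k}(\mathbf{x})}$; moreover, for any fixed $\ell\neq i$ the points $v^{(\ell)}_0,\dots,v^{(\ell)}_n$ are the distinct vertices of the $n$-simplex $\G_{\pi_\ell(\mathbf{x})}\subseteq X$, so these $n+1$ images are affinely independent, and $\ve^{i,k}$ restricts to an affine isomorphism from $\G^{(k)}_{\mathbf{x}}$ onto $\G^{(k-1)}_{\ve^{i,k}(\mathbf{x})}$. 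Combining (a)--(c) gives the result for both $W^k(f)$ and $D^k(f)$.
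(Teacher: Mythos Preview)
Your proof is correct and follows essentially the same approach as the paper: the paper's argument is simply the one-line observation that $\ve^{i,k}(\G^{(k)}_{\mathbf{x}})=\G^{(k-1)}_{\ve^{i,k}(\mathbf{x})}$ with the map being the identity in barycentric coordinates, which is exactly your part (c). Your parts (a) and (b) make explicit what the paper takes for granted (in particular, the inclusion $\ve^{i,k}(D^k(f))\subseteq D^{k-1}(f)$ is assumed without comment from Section~2 onward), so your version is a more careful unpacking of the same idea.
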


\begin{proof}
It is clear from \eqref{biggest} that $\ve^{i,k}(\G^{(k)}_{\mathbf{x}})=\G^{(k-1)}_{\ve^{i,k}(\mathbf{x})}$. 
The two geometric simplices have the same dimension, and with respect to the barycentric coordinates defined above,  $\ve^{i,k}$ is the identity map. 
\end{proof}

\begin{remark}\label{sumcom}
Let $Z^k$ be $W^k(f)$ or $D^k(f)$.
Consider the map $f^{(k)}\colon Z^k\to Y$ defined by 
$f^{(k)}(\mathbf{x})=f\bigl(\pi_\ell(\mathbf{x})\bigr)$ for some $\ell$ with $1\leq\ell\leq k$. 
Evidently this is independent of the choice of $\ell$, so for convenience we take $\ell=1$. We have
\begin{equation*}
f^{(k)}\left(\sum_j t_j\Bigl(v_{j}^{(1)},v_{j}^{(2)},\ld,v_{j}^{(k)}\Bigr)\right)=f\Bigl(\sum_jt_jv^{(1)}_{j}\Bigr)=\sum_jt_jf\bigl(v^{(1)}_{j}\bigr),
\end{equation*}
the last equality because $f$ is linear on each simplex of $X$. Thus $f^{(k)}$ defines an isomorphism 
$\G^{(k)}_{\mathbf{x} }\to \G_{f(\pi_\ell(\mathbf{x}))}.$ Conversely, for any $n$-simplex $\G$ in $Y$, each component of any point $\mathbf{x}$ in $Z^k$ such 
that $f^{(k)}(\G^{(k)}_{\mathbf{x}})=\G_y$, lies in the interior of a simplex which is mapped isomorphically to $\G_y$.  
\end{remark}

Again, let $Z^k$ denote $W^k(f)$ or $D^k(f)$. 
Let $\D=[w_0,\ld,w_n]$ be an ordered $n$-simplex of $Y$. Denote by $C_n(Z^k)|_{\D}$ and $C^\alt_n(Z^k)|_{\D}$ the 
collection of $n$-chains, and alternating $n$-chains, respectively,  consisting of linear combinations of ordered $n$-simplices $\D^{(k)}$ such that 
$f^{(k)}_\#(\D^{(k)})=\D$. Every such simplex is of the form $(\D_1\times\cd\times\D_k)/Y$, where each 
$\D_\ell$ is a simplex of $X$ such that $f_\#(\D_\ell)=\D$. Note that by $Z^1$ we mean simply $X$, so that both $C_n(Z^1)|_\D$ and $C_n^\alt(Z^1)|_\D$ consist of 
the free abelian group generated by the $n$-simplices $\Delta'$ of $X$ such that $f_\#(\D')=\D$. We have
\begin{equation*}
\ve^{i,k}_\#\bigl((\D_1\times\cd\times\D_k)/Y\bigr)=(\D_1\times\cd\times\widehat{\D_i}\times\cd\times\D_{k})/Y,
\end{equation*}
which implies the following  simple but important lemma. 
\begin{lemma}\label{subcomp} 
$C_n(Z^\bu)|_{\D}$ and $C^\alt_n(Z^\bu)|_{\D}$ are subcomplexes of the complexes $C_n(Z^\bu)$ and $C^\alt_n(Z^\bu)$.\eop
\end{lemma}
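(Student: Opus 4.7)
The statement has two content-parts: the vertical differentials of the complexes \eqref{eq:wcc} and \eqref{eq:rcy} preserve the subgroups $C_n(Z^\bullet)|_\Delta$ and $C_n^\alt(Z^\bullet)|_\Delta$, and the horizontal terminal arrow $f_{\#,n}\colon C_n(X)\to C_n(Y)$ sends $C_n(X)|_\Delta$ into the span of $\Delta$. My plan is to reduce everything to a single commutation identity between $f^{(k)}$ and the projections $\ve^{i,k}$, and then to a straightforward bookkeeping of ordered simplices.

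The key identity is
\begin{equation*}
f^{(k-1)}\circ \ve^{i,k} \;=\; f^{(k)}\qquad\text{for all }i=1,\ldots,k.
\end{equation*}
Indeed, if $i\neq 1$ then $\pi_1\circ\ve^{i,k}=\pi_1$, so $f\circ\pi_1\circ\ve^{i,k}=f\circ\pi_1$; if $i=1$, then $\pi_1\circ\ve^{1,k}=\pi_2$, and the defining fibred-product condition $f(\pi_1(\mathbf{x}))=f(\pi_2(\mathbf{x}))$ gives the same conclusion. (The identity holds on both $W^k(f)$ and $D^k(f)$.) Applied at the simplicial level, this says that for every ordered simplex $(\Delta_1\times\cdots\times\Delta_k)/Y$ of $Z^k$, one has
\begin{equation*}
f^{(k-1)}_{\#}\bigl(\ve^{i,k}_{\#}\bigl((\Delta_1\times\cdots\times\Delta_k)/Y\bigr)\bigr)=f^{(k)}_{\#}\bigl((\Delta_1\times\cdots\times\Delta_k)/Y\bigr).
\end{equation*}
Consequently, if the right-hand side equals $\Delta$, so does the left-hand side. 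Thus each $\ve^{i,k}_{\#,n}$ maps $C_n(Z^k)|_\Delta$ into $C_n(Z^{k-1})|_\Delta$, and hence so does the alternating sum $\rho^k_n=\sum_i(-1)^{i-1}\ve^{i,k}_{\#,n}$ used as vertical differential in diagram \eqref{eq:dc.w}, and likewise $\ve^k_{\#,n}$ used in diagram \eqref{eq:dc}.

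For the alternating version, I would note that $f^{(k)}$ is $S_k$-invariant (permuting components does not change what their common $f$-value is), so the subgroup $C_n(Z^k)|_\Delta$ is $S_k$-stable. Therefore its intersection $C_n^\alt(Z^k)|_\Delta=C_n(Z^k)|_\Delta\cap C_n^\alt(Z^k)$ is a well-defined subgroup, and Lemma \ref{lem:r.s.alt} combined with the previous paragraph shows that the vertical differentials of \eqref{eq:rcy} restrict to it. For the terminal horizontal arrow, an ordered simplex $\Delta'$ of $X$ lies in $C_n(X)|_\Delta$ exactly when $f_\#(\Delta')=\Delta$; thus $f_{\#,n}(C_n(X)|_\Delta)\subseteq\mathbb{Z}\cdot\Delta\subseteq C_n(Y)$, giving the subcomplex structure of the whole sequence \eqref{eq:wcc} (resp.\ \eqref{eq:rcy}). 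I do not foresee a serious obstacle here; the only thing to watch is the bookkeeping making sure that the $|_\Delta$-filtration is compatible with the $S_k$-action (and hence with alternation), which is handled by the $S_k$-invariance of $f^{(k)}$ above.
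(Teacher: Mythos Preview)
Your proof is correct and follows essentially the same approach as the paper: the paper's entire argument is the displayed formula $\ve^{i,k}_\#\bigl((\D_1\times\cdots\times\D_k)/Y\bigr)=(\D_1\times\cdots\times\widehat{\D_i}\times\cdots\times\D_k)/Y$ stated immediately before the lemma, which is the simplicial incarnation of your commutation identity $f^{(k-1)}\circ\ve^{i,k}=f^{(k)}$. Your additional remarks on the $S_k$-invariance of $f^{(k)}$ and on the terminal arrow $f_{\#,n}$ are correct and make explicit what the paper leaves to the reader.
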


\subsection{The sequence $\lb C_n(W^\bu(f)), \varrho^\bu\rb$ is a resolution of $C_n(Y)$.} 

Now we shall prove that the sequence \eqref{eq:wcc} is exact.

\begin{lemma}\label{lem:rho.rho}
$\rho^{k-1}_n\circ\rho^k_n=0$.
\end{lemma}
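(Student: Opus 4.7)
The plan is to reduce the claim to the standard simplicial face relation. Directly from the definition of $\ve^{i,k}:W^k(f)\to W^{k-1}(f)$ as the map forgetting the $i$'th component, I would first verify the identity
$$\ve^{i,k-1}\circ \ve^{j,k}=\ve^{j-1,k-1}\circ \ve^{i,k}\qquad\text{whenever } 1\leq i<j\leq k,$$
by applying both sides to a tuple $(x_1,\ldots,x_k)\in W^k(f)$: each yields the $(k-2)$-tuple obtained by deleting $x_i$ and $x_j$. Since $\ve^{i,k}$ is continuous (and even simplicial by Proposition~\ref{prop:eps.triang}), the analogous identity passes to the induced chain maps:
$$\ve^{i,k-1}_{\#,n}\circ \ve^{j,k}_{\#,n}=\ve^{j-1,k-1}_{\#,n}\circ \ve^{i,k}_{\#,n}\qquad (i<j).$$

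Next I would expand the composition using the definition of $\rho^k_n$ in \eqref{eq:rho}:
$$\rho^{k-1}_n\circ \rho^k_n=\sum_{i=1}^{k-1}\sum_{j=1}^{k}(-1)^{i+j}\,\ve^{i,k-1}_{\#,n}\circ \ve^{j,k}_{\#,n},$$
and split the double sum into the ranges $i<j$ and $i\geq j$. In the first range, substitute the face identity to rewrite each term as $(-1)^{i+j}\ve^{j-1,k-1}_{\#,n}\circ \ve^{i,k}_{\#,n}$ and then re-index by setting $a=i$, $b=j-1$, which puts it in the form $-(-1)^{a+b}\ve^{b,k-1}_{\#,n}\circ \ve^{a,k}_{\#,n}$ with $1\leq a\leq b\leq k-1$. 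The second range, after renaming $a=j$, $b=i$, contributes exactly $+(-1)^{a+b}\ve^{b,k-1}_{\#,n}\circ \ve^{a,k}_{\#,n}$ over the same index set. The two contributions cancel term-by-term, giving $\rho^{k-1}_n\circ\rho^k_n=0$.

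The only real work is bookkeeping of signs and the re-indexing; there is no substantive obstacle, as this is the standard semi-simplicial identity underlying the fact that the simplicial boundary operator squares to zero. It is worth remarking that exactly the same computation, restricted to alternating chains and with the single face map $\ve^k=\ve^{k,k}$, is not needed here (that is Lemma~\ref{isacomplex}, already proved via the $(k-1,k)$-transposition), so the present lemma is really about the full alternating sum appearing in $\rho^k_n$ on $W^k(f)$.
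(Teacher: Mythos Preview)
Your proof is correct and takes essentially the same approach as the paper: both carry out the ``usual computation'' that the alternating sum of face maps squares to zero, the paper by writing out explicit vertex coordinates and splitting into $j<i$ and $j>i$, you by first isolating the semi-simplicial identity $\ve^{i,k-1}\circ\ve^{j,k}=\ve^{j-1,k-1}\circ\ve^{i,k}$ and then performing the same re-indexing cancellation.
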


\begin{proof}
This is the usual computation. Denoting by $\widehat{v_n^{(i)}}$ the entry that has to be removed we have\setlength{\multlinegap}{0pt}
\begin{multline*}
\rho^{k-1}_n\circ\rho^k_n([(v_0^{(1)},\dots,v_0^{(k)}),\dots,(v_n^{(1)},\dots,v_n^{(k)})])\\
=\rho^{k-1}_n\Bigl(\sum_{i=1}^k(-1)^{i-1}[(v_0^{(1)},\dots,\widehat{v_0^{(i)}},\dots,v_0^{(k)}),\dots,(v_n^{(1)},\dots,\widehat{v_n^{(i)}},\dots,v_n^{(k)})]\Bigr)\\
=\sum_{j<i}(-1)^{i+j-2}[(v_0^{(1)},\dots,\widehat{v_0^{(j)}},\dots,\widehat{v_0^{(i)}},\dots,v_0^{(k)}),\dots,(v_n^{(1)},\dots,\widehat{v_n^{(j)}},\dots,\widehat{v_n^{(i)}},\dots,v_n^{(k)})]\\
+\sum_{j>i}(-1)^{i+j-3}[(v_0^{(1)},\dots,\widehat{v_0^{(i)}},\dots,\widehat{v_0^{(j)}},\dots,v_0^{(k)}),\dots,(v_n^{(1)},\dots,\widehat{v_n^{(i)}},\dots,\widehat{v_n^{(j)}},\dots,v_n^{(k)})].
\end{multline*}
The latter two summations cancel since after switching $i$ and $j$ in the second sum, it becomes the negative of the first.
\end{proof}

\begin{remark}
It is immediate from Lemma~\ref{lem:rho.rho} and the definition of $\varrho^k_n$ given in \eqref{eq:sign-trick} that we also have $\varrho^{k-1}_n\circ\varrho^k_n=0$.
Hence all the columns of the double complex \eqref{eq:dc.w} are chain complexes.
\end{remark}

\begin{lemma}\label{lem:f.rho}
$f_{\#,n}\circ\rho^2_n=0$.
\end{lemma}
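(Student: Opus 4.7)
The plan is to exploit the defining property of the fibred product $W^2(f)$, namely that $f\circ\pi_1=f\circ\pi_2$. Translating this into the language used in the paper, recall that the two face maps $\ve^{1,2},\ve^{2,2}\colon W^2(f)\to X$ are respectively $\pi_2$ and $\pi_1$ (they forget the first, resp.\ the second, component). Hence the definition of $W^2(f)$ says precisely that $f\circ \ve^{1,2}=f\circ\ve^{2,2}$ as maps $W^2(f)\to Y$.

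Next I would pass to the induced homomorphisms on simplicial $n$-chains. Since taking $\#$ is functorial, $f\circ \ve^{1,2}=f\circ\ve^{2,2}$ implies
\begin{equation*}
f_{\#,n}\circ \ve^{1,2}_{\#,n}=f_{\#,n}\circ \ve^{2,2}_{\#,n}.
\end{equation*}
Then I would unfold the definition~\eqref{eq:rho}, which for $k=2$ reads
\begin{equation*}
\rho^{2}_n=\ve^{1,2}_{\#,n}-\ve^{2,2}_{\#,n},
\end{equation*}
and compose on the left with $f_{\#,n}$. The two terms cancel by the previous displayed equation, yielding $f_{\#,n}\circ\rho^2_n=0$.

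There is essentially no obstacle here: the statement is an immediate consequence of the definition of $W^2(f)$ and the functoriality of $(\,\cdot\,)_{\#}$. The only thing to check is that the sign pattern $(-1)^{i-1}$ in \eqref{eq:rho} for $k=2$ produces a difference (not a sum), so that the two equal terms cancel rather than reinforce. One could alternatively phrase this using the explicit description from Section~\ref{fsm}: on a generating simplex $(\D_1\times\D_2)/Y$ with $f_\#(\D_1)=f_\#(\D_2)=\D$, one computes $\ve^{1,2}_\#\bigl((\D_1\times\D_2)/Y\bigr)=\D_2$ and $\ve^{2,2}_\#\bigl((\D_1\times\D_2)/Y\bigr)=\D_1$, so that $\rho^2_n\bigl((\D_1\times\D_2)/Y\bigr)=\D_2-\D_1$, which is mapped by $f_{\#,n}$ to $\D-\D=0$.
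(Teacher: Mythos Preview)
Your proposal is correct and takes essentially the same approach as the paper: the paper's proof is precisely the explicit computation on a generating simplex that you describe in your final paragraph, showing $f_{\#,n}\bigl([v_0^{(2)},\dots,v_n^{(2)}]-[v_0^{(1)},\dots,v_n^{(1)}]\bigr)=0$ because $f(v_j^{(1)})=f(v_j^{(2)})$. Your functorial phrasing via $f\circ\ve^{1,2}=f\circ\ve^{2,2}$ is the same argument one level of abstraction higher.
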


\begin{proof}\setlength{\multlinegap}{0pt}
\begin{multline*}
f_{\#,n}\circ\rho^2_n\bigl([(v_0^{(1)},v_0^{(2)}),\dots,(v_n^{(1)},v_n^{(2)})]\bigr)=f_{\#,n}\bigl([(v_0^{(2)}),\dots,(v_n^{(2)})]-[(v_0^{(1)}),\dots,(v_n^{(1)})]\bigr)\\
=[(f(v_0^{(2)})),\dots,(f(v_n^{(2)}))]-[(f(v_0^{(1)})),\dots,(f(v_n^{(1)}))]=0
\end{multline*}
since $f(v_i^{(1)})=f(v_i^{(2)})$ for $i=1,\dots,n$.
\end{proof}

By Lemma~\ref{lem:rho.rho} and Lemma~\ref{lem:f.rho} the sequence \eqref{eq:wcc} is a chain complex. It is also a chain complex if we replace $\rho^k_n$ by $\varrho^k_n$.

\begin{lemma}\label{lem:W.splitting}
For each $n$, the complex  $(C_n(W^{\bu}(f)),\varrho^\bu_n)$ splits as a direct sum 
of subcomplexes:
\begin{equation}\label{split.w}
C_n(W^{\bu}(f))=\bigoplus C_n(W^{\bu}(f))|_{\D}.
\end{equation}
Here the direct sum is over ordered $n$-simplices $\D$ in $Y$. 
\end{lemma}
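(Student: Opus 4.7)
The plan is to reduce the lemma to two elementary observations: a decomposition of each group $C_n(W^k(f))$ as an abelian group, and the fact that this decomposition is respected by the vertical differentials $\varrho^k_n$.

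First I would observe that by the explicit description of the simplicial structure of $W^k(f)$ given in \eqref{biggest}, every ordered $n$-simplex $\D^{(k)}$ of $W^k(f)$ has the form $(\D_1\times\cd\times\D_k)/Y$ where each $\D_\ell$ is an ordered $n$-simplex of $X$ and all the $\D_\ell$ satisfy $f_\#(\D_\ell)=\D$ for a common ordered $n$-simplex $\D$ of $Y$; equivalently, $f^{(k)}_\#(\D^{(k)})=\D$. Since $\D$ is uniquely determined by $\D^{(k)}$, as a free abelian group we get
\begin{equation*}
C_n(W^k(f))=\bigoplus_{\D} C_n(W^k(f))|_\D,
\end{equation*}
the sum ranging over ordered $n$-simplices $\D$ of $Y$.

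Next I would check that this decomposition is compatible with the differentials. The key point is that for each $i$ and $k$, $f^{(k-1)}\circ\ve^{i,k}=f^{(k)}$, because $f^{(k)}$ is independent of which component of $W^k(f)$ is projected to in its definition (as noted just before \eqref{eq:tmp}). Applied at the level of chains, this gives $f^{(k-1)}_\#\circ\ve^{i,k}_\#=f^{(k)}_\#$, so each $\ve^{i,k}_{\#,n}$ sends $C_n(W^k(f))|_\D$ into $C_n(W^{k-1}(f))|_\D$. By the definition \eqref{eq:sign-trick} of $\varrho^k_n$ as a signed sum of the $\ve^{i,k}_{\#,n}$, the same holds for $\varrho^k_n$. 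Therefore the decomposition above is in fact a decomposition of chain complexes $(C_n(W^\bu(f)),\varrho^\bu_n)$, one summand $C_n(W^\bu(f))|_\D$ for each ordered $n$-simplex $\D$ of $Y$, which is the statement of the lemma.

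There is no serious obstacle here: the lemma is essentially a bookkeeping statement which follows once one records that every simplex in the tower $W^\bu(f)$ has a well-defined image simplex in $Y$ under $f^{(\bu)}$, and that the face maps used to build $\varrho^\bu$ preserve this image. The only thing worth being careful about is the boundary case $k=1$, where $W^1(f)=X$ and $C_n(X)|_\D$ is the free abelian group on ordered $n$-simplices of $X$ mapping to $\D$ under $f_\#$; this fits the pattern without change.
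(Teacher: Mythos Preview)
Your proof is correct and follows essentially the same approach as the paper. The only difference is organisational: the paper cites Lemma~\ref{subcomp} for the fact that each $C_n(W^\bu(f))|_\D$ is a subcomplex (which amounts to your second paragraph), and leaves the abelian-group decomposition of your first paragraph implicit, whereas you spell out both steps directly.
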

\begin{proof}
We have already seen, in Lemma \ref{subcomp}, 
that each of the $(C_n(W^{\bu}(f))|_{\D},\varrho^\bu_n)$ {\it is} a subcomplex. From the definition of $C_n(W^{\bu}(f))|_{\D}$ we have that
if $\Delta\neq\D'$, then
\begin{equation*}
C_n(W^{k}(f))|_{\D}\,\cap\, C_n(W^{k}(f))|_{\D'}=0.
\end{equation*}
\end{proof} 

\begin{proposition}\label{prop:wcc.exact}
For each $n\geq0$, the complex
\begin{equation*}
\dots\to C_n(W^k(f))\xrightarrow{\varrho^k_n}\cdots
\xrightarrow{\varrho^3_n}C_n(W^2(f))\xrightarrow{\varrho^2_n}C_n(X)\xrightarrow{f_{\#,n}}C_n(Y)\to0
\end{equation*}
is an acyclic resolution of $C_n(Y)$.
\end{proposition}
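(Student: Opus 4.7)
The plan is to reduce the claim to a purely combinatorial chain contraction argument via the splitting of Lemma \ref{lem:W.splitting}.

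First, I would use Lemma \ref{lem:W.splitting} (together with the obvious decomposition $C_n(Y)=\bigoplus_\Delta C_n(Y)|_\Delta$ into free summands of rank one generated by each ordered $n$-simplex $\Delta$ of $Y$) to reduce exactness of the augmented complex to exactness, for each fixed ordered $n$-simplex $\Delta$ of $Y$, of the subcomplex
\begin{equation*}
\cdots\to C_n(W^k(f))|_\Delta\xrightarrow{\varrho^k_n}\cdots\xrightarrow{\varrho^2_n}C_n(X)|_\Delta\xrightarrow{f_{\#,n}}C_n(Y)|_\Delta\to 0.
\end{equation*}
Note that since $\varrho^k_n=(-1)^n\rho^k_n$, exactness with $\varrho^k_n$ and $\rho^k_n$ are equivalent, so I will work with $\rho^k_n$.

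Second, I would identify the basis of $C_n(W^k(f))|_\Delta$. Let
\begin{equation*}
S_\Delta=\{\Delta'\text{ an ordered $n$-simplex of }X:f_\#(\Delta')=\Delta\},
\end{equation*}
which is finite (since $f$ is finite-to-one and $\Delta$ has finitely many vertices) and nonempty (by Lemma \ref{basic2}). By the description of the triangulation of $W^k(f)$ following Proposition \ref{prop:sc.W}, the ordered $n$-simplices in $W^k(f)$ lying over $\Delta$ are precisely the expressions $(\Delta_1\times\cdots\times\Delta_k)/Y$ with each $\Delta_i\in S_\Delta$, so $C_n(W^k(f))|_\Delta$ has basis in bijection with $S_\Delta^k$, and $\rho^k_n$ acts by deleting a coordinate with the usual alternating signs.

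Third, I would build an explicit contracting homotopy by picking once and for all a fixed element $\Delta_0\in S_\Delta$ and defining
\begin{equation*}
h\bigl((\Delta_1\times\cdots\times\Delta_k)/Y\bigr)=(\Delta_0\times\Delta_1\times\cdots\times\Delta_k)/Y\quad(k\geq 1),\qquad h(\Delta)=\Delta_0.
\end{equation*}
A direct computation, placing the inserted $\Delta_0$ in position $1$ and matching face terms via the index shift $j=i+1$, shows that
\begin{equation*}
\rho^{k+1}_n\circ h+h\circ\rho^k_n=\mathrm{id}\quad(k\geq 1),\qquad f_{\#,n}\circ h=\mathrm{id},\qquad \rho^2_n\circ h+h\circ f_{\#,n}=\mathrm{id}.
\end{equation*}
From these identities exactness at every stage follows: if $\rho^k_n x=0$ then $x=\rho^{k+1}_n h(x)$; if $f_{\#,n}x=0$ then $x=\rho^2_n h(x)$; and $f_{\#,n}$ is split surjective via $h$.

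The only nontrivial step is verifying the homotopy identity, which is the standard sign cancellation for the bar-type differential and is independent of the simplicial geometry; the genuine content of the proposition is the splitting in Lemma \ref{lem:W.splitting}, which reduces the problem, $\Delta$ by $\Delta$, to the augmented chain complex of the contractible simplicial set $S_\Delta^{\bullet+1}$ with projection face maps. I expect no real obstacle beyond careful bookkeeping of signs when passing from $\rho^k_n$ to the sign-twisted $\varrho^k_n$ of the double complex.
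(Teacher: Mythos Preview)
Your proposal is correct and follows essentially the same route as the paper: reduce via the splitting of Lemma~\ref{lem:W.splitting} to a single ordered simplex $\Delta$ of $Y$, then contract the resulting complex by prepending a fixed preimage simplex $\Delta_0\in S_\Delta$, exactly the map the paper calls $s_k^{\Delta'}$. The only cosmetic difference is that the paper records the homotopy identities at the level of the individual face maps $\ve^{i,k}_{\#,n}$ rather than the assembled differential $\rho^k_n$, and your added remark identifying the local complex with the augmented chain complex of the simplicial set $S_\Delta^{\bullet+1}$ is a nice conceptual gloss the paper does not make explicit.
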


\begin{proof}
We have $C_n(Y)=\bigoplus \ZZ\cdot\D$, where the direct sum is over ordered $n$-simplices of $Y$. We claim that for each ordered $n$-simplex $\D$ of $Y$, the complex
\begin{equation*}
\dots\to C_n(W^k(f))|_{\D}\xrightarrow{\varrho^k_n}\cdots
\xrightarrow{\varrho^3_n}C_n(W^2(f))|_{\D}\xrightarrow{\varrho^2_n}C_n(X)|_{\D}\xrightarrow{f_{\#,n}}\ZZ\cdot\D\to0
\end{equation*}
is exact. Because of the splitting of $(C_n(W^\bu(f)),\varrho^\bu_n)$ as a direct sum of subcomplexes  $(C_n(W^\bu(f))|_{\D},\varrho^\bu_n)$ given by Lemma~\ref{lem:W.splitting}
this will prove the proposition.

Suppose $\D=[w_0,\ld,w_n]$. Let $\D'=[v_0,\ld,v_n]$ be an ordered $n$-simplex in $X$ such that $f_\#(\D')=\D$ with $f(v_j)=w_j$ for $j=1,\ld,n$. Let
\begin{equation*}
\D^{(k)}=\Bigl[(v_0^{(1)},\dots,v_0^{(k)}),\dots,(v_n^{(1)},\dots,v_n^{(k)})\Bigr]
\end{equation*}
be an ordered $n$-simplex in $C_n(W^k(f))|_{\D}$, that is, an ordered $n$-simplex in $W^k(f)$ such that $f^{(k)}(\D^{(k)})=\D$ and $f(v_j^{(i)})=w_j$ for $i=1,\ld,k$ and $j=1,\ld,n$.
Define the homomorphisms
\begin{gather*}
s_k^{\D'}\colon C_n(W^k(f))|_{\D}\to C_n(W^{k-1}(f))|_{\D}\\
s_k^{\D'}([(v_0^{(1)},\dots,v_0^{(k)}),\dots,(v_n^{(1)},\dots,v_n^{(k)})])=[(v_0,v_0^{(1)},\dots,v_0^{(k)}),\dots,(v_n,v_n^{(1)},\dots,v_n^{(k)})]
\end{gather*}
for $k\geq1$ and
\begin{align*}
s_0^{\D'}\colon \ZZ\cdot\D&\to C_n(X)|_{\D}\\
\D=[w_0,\ld,w_n]&\mapsto\D'=[v_0,\ld,v_n].
\end{align*}
It is straightforward to see that
\begin{itemize}
 \item $\ve^{1,k}_{\#,n}\circ s_k^{\D'}=Id$,
\item $\ve^{i+1,k+1}_{\#,n}\circ s_k^{\D'}=s_{k-1}^{\D'}\circ\ve^{i,k}_{\#,n}$,
\item $f_\#\circ s_0^{\D'}=Id$ 
\end{itemize}
Therefore $s_k^{\D'}$ with $k\geq0$ defines a contracting homotopy and the complex is acyclic.
\end{proof}

\subsection{The sequence $\lb C^\alt_n(D^\bu(f)), \epsilon^\bu\rb$ is a resolution of $C_n(Y)$.} 

Now it is the turn to prove that the sequence \eqref{eq:rcy} is exact.

\begin{lemma}\label{D2XY}$\ve^2_{\#,n}\left(C^\alt_n(D^2(f))\right)=\ker\left(f_{\#,n}\colon C_n(X)\to C_n(Y)\right)$.
\end{lemma}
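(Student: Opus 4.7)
The proof splits into two inclusions. For $\ve^2_{\#,n}(C^\alt_n(D^2(f)))\subseteq\ker f_{\#,n}$, I would exploit the $S_2$-symmetry: for $c\in C^\alt_n(D^2(f))$ the swap $\sigma=(12)$ satisfies $\sigma_\#c=-c$, and the identity $\ve^{1,2}\circ\sigma=\ve^{2,2}$ (a special case of \eqref{eq:bar.sigma}) combined with $\sigma_\#c=-c$ gives $\ve^{1,2}_{\#,n}(c)=-\ve^{2,2}_{\#,n}(c)=-\ve^2_{\#,n}(c)$. Since $f\circ\ve^{1,2}=f\circ\ve^{2,2}=f^{(2)}$ on $W^2(f)\supset D^2(f)$, applying $f_{\#,n}$ to this identity forces $f_{\#,n}\ve^2_{\#,n}(c)=-f_{\#,n}\ve^2_{\#,n}(c)$, and torsion-freeness of $C_n(Y)$ yields $f_{\#,n}\ve^2_{\#,n}(c)=0$.

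For the reverse inclusion $\ker f_{\#,n}\subseteq\ve^2_{\#,n}(C^\alt_n(D^2(f)))$, I would first reduce to a per-simplex analysis. Just as in Lemma \ref{lem:W.splitting}, we have a direct-sum decomposition $C_n(X)=\bigoplus_\D C_n(X)|_\D$ and its alternating analogue $C^\alt_n(D^2(f))=\bigoplus_\D C^\alt_n(D^2(f))|_\D$, with the sum taken over ordered $n$-simplices $\D$ of $Y$; the alternating subgroup splits componentwise because the $S_2$-action commutes with $f^{(2)}_\#$. Since $\ve^2_{\#,n}$ respects the splitting and $f_{\#,n}$ sends $C_n(X)|_\D$ into $\ZZ\cdot\D$, it suffices to prove, for each fixed $\D$, that the restriction of $\ve^2_{\#,n}$ to $C^\alt_n(D^2(f))|_\D$ surjects onto the kernel of $f_{\#,n}|_{C_n(X)|_\D}$.

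Fix an ordered $n$-simplex $\D$ of $Y$ and let $\{\D_j\}_{j\in J}$ enumerate the ordered $n$-simplices of $X$ with $f_\#(\D_j)=\D$; by Lemma \ref{basic2}, $J\neq\emptyset$. Since $f$ is injective on each geometric simplex (Lemma \ref{basic1}), the $\D_j$ have pairwise distinct underlying geometric simplices, so for every $j\neq j_0$ in $J$ the product $(\D_j\times\D_{j_0})/Y$ is a genuine simplex of $D^2(f)$ lying off the diagonal of $X\times X$. Given $c=\sum_j n_j\D_j$ with $f_{\#,n}(c)=0$, i.e.\ $\sum_j n_j=0$, fix any $j_0\in J$ and set
$$c'=\sum_{j\neq j_0}n_j\bigl((\D_j\times\D_{j_0})/Y-(\D_{j_0}\times\D_j)/Y\bigr).$$
By construction $\sigma_\#c'=-c'$, so $c'\in C^\alt_n(D^2(f))|_\D$, and using $\sum_{j\neq j_0}n_j=-n_{j_0}$ a direct computation gives
$$\ve^2_{\#,n}(c')=\sum_{j\neq j_0}n_j\D_j-\Bigl(\sum_{j\neq j_0}n_j\Bigr)\D_{j_0}=\sum_j n_j\D_j=c.$$
The delicate point in the construction is confirming that each building block $(\D_j\times\D_{j_0})/Y$ really lies in $D^2(f)$ rather than just in $W^2(f)$; this is precisely what the injectivity of $f$ on simplices (Lemma \ref{basic1}) guarantees. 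Everything else is a routine sign check.
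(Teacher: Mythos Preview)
Your proof is correct. The inclusion $\subseteq$ is exactly the paper's Lemma~\ref{isacomplex} unpacked for $k=2$. For the reverse inclusion you take a slightly different route from the paper: the paper argues globally by induction on the $\ell^1$-norm $\|c\|=\sum_i|m_i|$, at each step finding two simplices $\Delta_{i_1},\Delta_{i_2}$ with the same image and opposite-sign coefficients, and subtracting off $\ve^2_{\#,n}\bigl((\Delta_{i_1}\times_Y\Delta_{i_2})-(\Delta_{i_2}\times_Y\Delta_{i_1})\bigr)=\Delta_{i_1}-\Delta_{i_2}$ to reduce $\|c\|$. You instead first split along the target simplices $\D$ (anticipating the decomposition proved later, just before Proposition~\ref{main}) and then, for each summand, write down a single explicit alternating preimage by choosing a basepoint $\D_{j_0}$. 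Your version is a bit more direct and avoids the induction; the paper's version avoids having to state the splitting in advance. The check that $(\D_j\times\D_{j_0})/Y$ lies in $D^2(f)$ is the same in both arguments and your justification via Lemma~\ref{basic1} is fine: since $f$ is injective on each geometric simplex, distinct $j$'s give distinct geometric simplices, hence the interior of $(\D_j\times\D_{j_0})/Y$ lies in $D^2_S(f)$.
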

\begin{proof} We have already seen in Lemma \ref{isacomplex} that 
\begin{equation*}
\ve^2_{\#,n}\left(C^\alt_n(D^2(f))\right)\subseteq\ker\left(f_{\#,n}:C_n(X)\to C_n(Y)\right).
\end{equation*}
Now we prove the opposite inclusion.

Suppose that $c:=\sum_im_i\Delta_i$ is a chain in $\ker f_{\#,n}\colon C_n(X)\to C_n(Y)$:
\begin{equation*}
\sum_im_if_{\#,n}(\Delta_i)=0.
\end{equation*}
Write $\|c\|:=\sum_i|m_i|$. We can assume that for each $i\neq j$ with $m_i\neq 0\neq m_j$, we have $\Delta_i\neq\Delta_j$. 
For each simplex
$\Delta_{i_1}$ with $m_{i_1}\neq 0$, let $\Delta_{i_2},\ld, \Delta_{i_r}$ be the simplices with non-zero coefficients in $c$ such that $f_{\#,n}(\Delta_{i_j})= f_{\#,n}(\Delta_{i_1})$.  
We can assume that $m_{i_1}>0$. 
Thus  $\sum_{j=1}^rm_{i_j}=0$. Without loss of generality we can assume that $m_{i_2}<0$. Let 
$\Delta_{i_1}=[v_0,\ld,v_n]$ and $\Delta_{i_2}=[v'_0,\ld,v'_n]$; our assumptions means that $f(v_j)=f(v'_j)$ for $j=0,\ld,n$. 
Thus, as explained in the construction of the triangulation of the $D^k(f)$ preceding Corollary~\ref{simplcomp},
$\D_{i_1}\times_Y\D_{i_2}=[(v_0,v'_0),\ld,(v_n,v'_n)]$
is an $n$-simplex in the triangulation of $D^2(f)$. Then 
\begin{equation*}
c':=\D_{i_1}\times_Y\D_{i_2}-\D_{i_2}\times_Y\D_{i_1}=[(v_0,v'_0),\ld,(v_n,v'_n)]-[(v'_0,v_0),\ld,(v'_n,v_n)]
\end{equation*}
is an alternating $n$-chain on $D^2(f)$ and
\begin{equation*}
\ve^2_{\#,n}(c')=\Delta_{i_1}-\Delta_{i_2}.
\end{equation*}
The chain $c-\ve^2_\#(c')$ lies in $\ker\,f_{\#,n}$, and moreover $\|c-\ve^2_{\#,n}(c')\|<\|c\|$. By induction on $\|c\|$,
we conclude that
\begin{equation*}
\ker\,f_{\#,n}\subset \ve^2_{\#,n}\left(C_n^\alt(D^2(f)\right).
\end{equation*}
Since the opposite inclusion always holds, this is an equality.
\end{proof} 

For any simplex $\D^{(k)}$ in the triangulation of $D^k(f)$, let 
\beq\label{defalt}\alt_\ZZ(\D^{(k)})=\sum_{\sigma\in S_k}\sign(\sigma)\sigma_\#(\D^{(k)}).\eeq
Clearly $\alt_\ZZ(\D^{(k)})$ is an alternating chain.  We use the subindex $\ZZ$ to distinguish this operator from
the more familiar projection operator $\alt_\QQ=\frac{1}{k!}\sum_{\sigma\in S_k}\sign(\sigma)\sigma_\#.$
\begin{lemma}\label{dirs} 
\begin{enumerate}[1.]
 \item If the simplex $\D^{(k)}$ in $D^k(f)$ is invariant under some non-trivial permutation $\sigma\in S_k$,  then $\D^{(k)}$ does not appear in any irredundant expression of any alternating chain $c$, 
and moreover $\alt_\ZZ(\D^{(k)})=0$.
\item Let $\D_y$ be an $n$-simplex in $Y$, and let $\D_{1},\ld,\D_{N}$ be simplices in $X$ for which
$f_\#(\D_i)=\D_y$, with $\D_i\neq \D_j$ for $i\neq j$.  For $1\leq i_1<\cd <i_k\leq N$, denote the sequence $i_1,\ld,i_k$ by $I$, and let 
$\D^{(k)}_I$ be the $n$-simplex $(\D_{i_1}\times\cd\times \D_{i_k})/Y$ in $D^k(f)$. 
Then $C^\alt_n(D^k(f))$ is freely generated over $\ZZ$ by the alternating chains $\alt_\ZZ(\D^{(k)}_{I})$ for such $I$.
\end{enumerate}
\end{lemma}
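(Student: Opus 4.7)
My plan is to reduce both parts of the lemma to a careful analysis of the stabilizer subgroup of an ordered simplex $\Delta^{(k)}=(\Delta_{i_1}\times\cdots\times\Delta_{i_k})/Y$ of $D^k(f)$ under the $S_k$-action. The key structural observation is that the stabilizer of such a simplex is exactly the Young subgroup $S_{n_1}\times S_{n_2}\times\cdots$ corresponding to the multiplicities of equal entries in the tuple $(\Delta_{i_1},\ldots,\Delta_{i_k})$. In particular, if the stabilizer is non-trivial, then some $n_j\geq 2$, and the stabilizer contains a transposition $\tau$, which is an odd permutation.

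For part 1, assume $\Delta^{(k)}$ has non-trivial stabilizer, and pick such a transposition $\tau$. If $c=\sum_j m_j\Delta^{(k)}_j$ is an irredundant expression for an alternating chain and $\Delta^{(k)}$ appears with coefficient $m$, then on the one hand $\tau_\#(c)=\sgn(\tau)c=-c$, so in $\tau_\#(c)$ the coefficient of $\Delta^{(k)}$ is $-m$; on the other hand $\tau_\#(\Delta^{(k)})=\Delta^{(k)}$, so the coefficient of $\Delta^{(k)}$ in $\tau_\#(c)$ is also $m$. Hence $m=0$. For the second statement, decompose $S_k$ into left cosets of the stabilizer $\text{Stab}(\Delta^{(k)})$; it suffices to show $\sum_{\sigma\in \text{Stab}(\Delta^{(k)})}\sgn(\sigma)=0$, which is immediate by pairing $\sigma\leftrightarrow\sigma\tau$.

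For part 2, first apply Lemma~\ref{subcomp} to reduce to a single fibre:
\[
C_n^\alt(D^k(f))=\bigoplus_\Delta C_n^\alt(D^k(f))|_\Delta,
\]
where $\Delta$ runs over ordered $n$-simplices of $Y$. Fixing such a $\Delta$ and an enumeration $\Delta_1,\ldots,\Delta_N$ of the $n$-simplices of $X$ mapping to $\Delta$, the group $C_n(D^k(f))|_\Delta$ is free abelian on the ordered simplices $\Delta_{j_1,\ldots,j_k}^{(k)}:=(\Delta_{j_1}\times\cdots\times\Delta_{j_k})/Y$, and $S_k$ acts by permuting the components. The orbits split into two types: those with repeated indices (simplices with non-trivial stabilizer) and those with all indices distinct (free orbits). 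Each free orbit has a unique representative $\Delta^{(k)}_I$ with $1\leq i_1<\cdots<i_k\leq N$, and the $k!$ translates $\sigma_\#(\Delta^{(k)}_I)$ are all distinct.

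Given an alternating chain $c\in C_n^\alt(D^k(f))|_\Delta$, part~1 forces all coefficients on repeated-index simplices to vanish, so $c$ is supported on free orbits. The alternating condition pins down the coefficient of $\sigma_\#(\Delta^{(k)}_I)$ to be $\sgn(\sigma)$ times the coefficient of $\Delta^{(k)}_I$, so
\[
c=\sum_I c_I\,\alt_\ZZ(\Delta^{(k)}_I),
\]
with the sum over strictly increasing $I$. Linear independence is automatic because the supports of $\alt_\ZZ(\Delta^{(k)}_I)$ for different $I$ lie in disjoint $S_k$-orbits of simplices, hence involve disjoint subsets of a $\ZZ$-basis of $C_n(D^k(f))$. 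The main thing to be careful about is the purely combinatorial identification of the stabilizer as a Young subgroup, which is what makes the odd-permutation argument in part~1 work and ensures that the ``useful'' orbits in part~2 really are free.
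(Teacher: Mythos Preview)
Your proof is correct and follows essentially the same approach as the paper: both arguments locate a transposition in the stabilizer of $\Delta^{(k)}$ (you via the Young-subgroup description of the stabilizer, the paper by observing that $\sigma(i)=j$ with $i\neq j$ forces $|\Delta^{(k)}|\subset\{x_i=x_j\}$, hence $(i,j)$ also fixes $\Delta^{(k)}$), and then use the resulting sign contradiction. Your coset-sum computation for $\alt_\ZZ(\Delta^{(k)})=0$ and your direct free-orbit argument in part~2 are mild streamlinings of the paper's versions, which instead apply the first claim to the alternating chain $\alt_\ZZ(\Delta^{(k)})$ and argue spanning by induction on $\|c\|$.
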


\begin{proof}
1. If $\D^{(k)}$ is invariant under some non-trivial permutation $\sigma\in S_k$, pick $i\neq j$
such that $\sigma(i)=j$. The fact that $\D^{(k)}$ is invariant under $\sigma$ means that $|\Delta^{(k)}|$ is contained
in the set $\{x_i=x_j\}\subset X^k$, and hence $\Delta^{(k)}$ is invariant under $(i,j)$. Suppose that the coefficient of $\D^{(k)}$ in some alternating chain $c$ on $D^k(f)$ is $m$. 
Because $(i,j)$ leaves $\D^{(k)}$ fixed, its coefficient in $(i,j)_\#(c)$ is also $m$. But because $c$ is alternating and $\sign(i,j)=-1$, we must have $m=-m$. Thus $m=0$.

If $\D^{(k)}$ is invariant under some non-trivial permutation, so is $\sigma_\#(\D^{(k)})$ for each $\sigma\in S_k$.  It follows that $\alt(\D^{(k)}_i)=0$.   
\vsn 2.  By part 1 of the lemma, every simplex $\D^{(k)}$ appearing non-trivially in an alternating chain is in the $S_k$ orbit of some
simplex $\D^{(k)}_{I}$ with $I$ as described. The 
argument just given shows that for any such simplex $\D^{(k)}_I$, the simplices $\sigma_\#(\D^{(k)}_I), \sigma\in S_k,$ are pairwise distinct, 
and thus that the coefficient of $\D^{(k)}_I$ in $\alt_\ZZ(\D^{(k)}_I)$ is equal to 1. Now let $c$ be an alternating simplicial $n$-chain on $D^k(f)$. 
We claim that $c$ is a linear combination of chains $\alt(\D^{(k)}_{I})$. To see this, suppose that $m\neq 0$ is the coefficient of $\D^{(k)}_{I}$ in an
irredundant expression of $c$ as linear combination of ordered simplices. Let $\|c\|$ be the sum of the absolute values of the coefficients of the simplices in such an expression. 
Because $c$ is alternating, for each $\sigma\in S_k$, the coefficient of $\sigma_\#(\D^{(k)}_{I})$ is $\sign(\sigma)m$. It follows that
\begin{equation*}
\|c-m\alt_\ZZ(\D^{(k)}_\x)\|<\|c\|.
\end{equation*}
Hence, by induction on $\|c\|$, $c$ is a linear combination of alternating chains $\alt_\ZZ(\D^{(k)}_I)$.  

That there are no relations among distinct chains $\alt_\ZZ\D{(k)}_I$ is obvious: if $I=\{i_1,\ld,i_k\}$ and $I'\neq I$
then the simplex $(\D_{i_1}\times\cd\times \D_{i_k})/Y$ does not appear in $\alt_\ZZ\D^{(k)}_{I'}$. 
\end{proof} 

\begin{remark}
The lemma shows that $C^\alt_n(D^k(f))$ is equal to the image of the ``Alternation Operator" 
$\alt_\ZZ:C_n(D^k(f))\to C_n^\alt(D^k(f))$. 
We learned this from Kevin Houston's paper \cite{houstontop}.  
This is a special feature of  this $S_k$-action. In contrast, consider the action of $S_3$ on $\RR\times\RR^3$ defined by 
$\sigma(t,x_1,x_2,x_3)=((\text{sign }\sigma )t,x_{\sigma(1)},x_{\sigma(2)},x_{\sigma(3)})$. The point $(1,x,x,x)$
is then invariant only under even permutations. If we apply $\alt_\ZZ$ to $(1,x,x,x)$, we get 
$3((1,x,x,x)-(-1,x,x,x))$. The alternating $0$-chain $(1,x,x,x)-(-1,x,x,x)$ is not in the image of $\alt_\ZZ.$
\end{remark}

\begin{lemma} 
For each $n$, the complex  $(C_n^\alt(D^{\bu}(f)),\epsilon^\bu_n)$ splits as a direct sum 
of complexes:
\beq\label{splitnew}C_n^\alt(D^{\bu}(f))=\bigoplus C_n^\alt(D^{\bu}(f))|_{\D}.\eeq
Here the direct sum is over ordered $n$-simplices $\D$ in $Y$. 
\end{lemma}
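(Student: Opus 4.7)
The plan is to mirror the proof of Lemma \ref{lem:W.splitting} almost verbatim, substituting alternating chains for ordinary chains and invoking the generation result in Lemma \ref{dirs} part 2 at the appropriate point. Lemma \ref{subcomp} has already told us that each $C_n^\alt(D^\bu(f))|_{\D}$ is a subcomplex of $C_n^\alt(D^\bu(f))$, so the task reduces to verifying the direct-sum decomposition level by level in $k$.

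First I would record that distinct summands intersect trivially: if $\D\neq\D'$ are ordered $n$-simplices of $Y$, then
\begin{equation*}
C_n^\alt(D^k(f))|_{\D}\,\cap\,C_n^\alt(D^k(f))|_{\D'}=0,
\end{equation*}
because any $n$-simplex $\D^{(k)}$ of $D^k(f)$ has a single image $f^{(k)}_\#(\D^{(k)})$ in $Y$, so in an irredundant expression it can contribute only to the summand indexed by that image. Hence the sum $\sum_{\D}C_n^\alt(D^k(f))|_{\D}$ is direct.

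Next I would show that this direct sum exhausts $C_n^\alt(D^k(f))$. By Lemma \ref{dirs} part 2, the group $C_n^\alt(D^k(f))$ is freely generated over $\ZZ$ by the alternating chains $\alt_\ZZ(\D^{(k)}_{I})$, where $\D^{(k)}_{I}=(\D_{i_1}\times\cd\times\D_{i_k})/Y$ is built from distinct simplices $\D_{i_1},\ld,\D_{i_k}$ of $X$ all mapping under $f_\#$ to a common ordered $n$-simplex $\D_y$ of $Y$. For each $\sigma\in S_k$ we have $f^{(k)}\circ\sigma=f^{(k)}$, so $f^{(k)}_\#(\sigma_\#(\D^{(k)}_{I}))=\D_y$ as well, and therefore the entire generator $\alt_\ZZ(\D^{(k)}_{I})$ lies in $C_n^\alt(D^k(f))|_{\D_y}$. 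Partitioning the free generating set according to the value of $\D_y$ yields the desired decomposition.

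There is no real obstacle here beyond bookkeeping; once Lemma \ref{dirs} is in hand, the argument is formally identical to the one for $W^\bu(f)$, and the key observation making it work is simply that $f^{(k)}$ is $S_k$-invariant, so that the alternating projection $\alt_\ZZ$ preserves the grading by ordered $n$-simplices of $Y$.
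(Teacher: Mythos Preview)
Your proposal is correct and follows essentially the same approach as the paper's proof: invoke Lemma \ref{subcomp} for the subcomplex property, observe that distinct summands intersect trivially, and then appeal to Lemma \ref{dirs} to see that the alternating generators $\alt_\ZZ(\D^{(k)}_I)$ partition according to the image simplex $\D_y$. The paper's version is simply terser, saying only that ``the splitting now follows from Lemma \ref{dirs}'', whereas you spell out the role of the $S_k$-invariance of $f^{(k)}$ explicitly.
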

\begin{proof}
We have already seen, in Lemma \ref{subcomp}, 
that each of the $(C_n^\alt(D^{\bu}(f))|_{\D},\epsilon^\bu_n)$ {\it is} a subcomplex. If $\Delta\neq\D'$, then
\begin{equation*}
C_n^\alt(D^{k}(f))|_{\D}\,\cap\, C_n^\alt(D^{k}(f))|_{\D'}=0.
\end{equation*}
The splitting \eqref{splitnew} now follows from Lemma \ref{dirs}. 
\end{proof} 
Now we prove the main theorem.  

\begin{proposition}\label{main} 
For each $n\geq 0$, the complex of alternating simplicial chains
\begin{equation*}
\dots\to C_n^\alt(D^k(f))\xrightarrow{\epsilon^{k}_n}\cdots
\xrightarrow{\epsilon^{3}_n}C_n^\alt(D^2(f))\xrightarrow{\epsilon^{2}_n}C_n(X)\xrightarrow{f_{\#,n}}C_n(Y)\to0,
\end{equation*}
is an acyclic resolution of $C_n(Y)$. 
\end{proposition}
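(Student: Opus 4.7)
The strategy mirrors the proof of Proposition~\ref{prop:wcc.exact}. Using the splitting
$C_n^\alt(D^{\bu}(f))=\bigoplus_{\D} C_n^\alt(D^{\bu}(f))|_{\D}$
together with the obvious splitting $C_n(Y)=\bigoplus_\D \ZZ\cdot\D$, and noting that $f_{\#,n}$ and each $\epsilon^k_n$ respect these splittings, the problem reduces to showing that, for each ordered $n$-simplex $\D$ in $Y$, the complex
\begin{equation*}
\dots\to C_n^\alt(D^k(f))|_\D\xrightarrow{\epsilon^{k}_n}\cdots \xrightarrow{\epsilon^{2}_n}C_n(X)|_\D\xrightarrow{f_{\#,n}}\ZZ\cdot\D\to0
\end{equation*}
is exact. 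The bottom map is surjective by Lemma~\ref{basic2}, and exactness at $C_n(X)|_\D$ is already Lemma~\ref{D2XY}, so the real content lies higher up.

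Fix such a $\D$, and let $\D_1,\ldots,\D_N$ be the (finitely many, by finiteness of $f$) ordered $n$-simplices of $X$ with $f_\#(\D_i)=\D$. By Lemma~\ref{dirs}(2), the group $C_n^\alt(D^k(f))|_\D$ is freely generated by the alternating chains
$\alt_\ZZ(\D^{(k)}_I)=\alt_\ZZ\bigl((\D_{i_1}\times\cdots\times\D_{i_k})/Y\bigr)$
as $I=\{i_1<\cdots<i_k\}$ ranges over $k$-element subsets of $\{1,\ldots,N\}$. I would next compute $\epsilon^{k+1}_n$ on this basis by collecting, for each $j\in\{1,\ldots,k+1\}$, the permutations $\sigma\in S_{k+1}$ with $\sigma(k+1)=j$; writing each such $\sigma$ as the cycle $(j,j+1,\ldots,k+1)$ composed with an element of $S_k$ gives the sign $(-1)^{k+1-j}\sign(\tau)$, and one obtains
\begin{equation*}
\ve^{k+1,k+1}_{\#,n}\bigl(\alt_\ZZ(\D^{(k+1)}_I)\bigr)=\sum_{j=1}^{k+1}(-1)^{k+1-j}\alt_\ZZ(\D^{(k)}_{I\smallsetminus\{i_j\}}).
\end{equation*}

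Up to the fixed sign $(-1)^n$ built into $\epsilon^{k+1}_n$ and a trivial reindexing, this is precisely the standard boundary operator of the augmented simplicial chain complex of the abstract $(N-1)$-simplex with vertex set $\{\D_1,\ldots,\D_N\}$; the map $f_{\#,n}:C_n(X)|_\D\to\ZZ\cdot\D$ plays the role of the augmentation. Since the $(N-1)$-simplex is contractible, its reduced simplicial homology vanishes, so this complex is acyclic, finishing the proof.

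The only mild obstacle is keeping straight the signs from $\alt_\ZZ$, from $\ve^{k,k}$ (rather than, say, $\ve^{1,k}$), and from the $(-1)^n$ in the definition of $\epsilon^k_n$; these combine into an isomorphism with the standard augmented simplicial complex (possibly after rescaling basis elements by fixed signs), which does not affect acyclicity. A more hands-on alternative is to fix the preimage $\D_1$ and define a contracting homotopy by $s_k(\alt_\ZZ(\D^{(k)}_I))=\pm\alt_\ZZ(\D^{(k+1)}_{\{1\}\cup I})$ if $1\notin I$ and $0$ otherwise, together with $s_0(\D)=\D_1$, and check $\epsilon^{k+1}_n\circ s_k+s_{k-1}\circ \epsilon^k_n=\mathrm{id}$ on basis elements; either way the geometric content is the contractibility of a simplex.
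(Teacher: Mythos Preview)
Your proposal is correct and follows essentially the same route as the paper: reduce via the splitting over simplices $\D$ of $Y$, use Lemma~\ref{dirs} to identify a free basis $\alt_\ZZ(\D^{(k)}_I)$ indexed by $k$-subsets $I\subset\{1,\ldots,N\}$, and then recognize the resulting complex as (a sign-twisted version of) the augmented simplicial chain complex of the standard $(N-1)$-simplex, which is acyclic by contractibility. Your sign computation, organized by collecting $\sigma\in S_{k+1}$ with $\sigma(k+1)=j$, yields exactly the coefficient $(-1)^{k-\ell}$ the paper finds via its permutation $\sigma_{k,\ell}$; the alternative contracting homotopy you sketch at the end (paralleling the proof of Proposition~\ref{prop:wcc.exact}) also works but is not the path the paper takes here.
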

\begin{proof} 
We have $C_n(Y)=\bigoplus \ZZ\cdot\D$, where the direct sum is over $n$-simplices of $Y$. We claim that for each $n$-simplex $\D$ of $Y$,
the complex
\begin{equation}\label{fincom1}
\dots\to C_n^\alt(D^k(f))|_\D\xrightarrow{\epsilon^{k}_n}\cdots
\xrightarrow{\epsilon^{3}_n}C_n^\alt(D^2(f))|_\D\xrightarrow{\epsilon^{2}_n}C_n(X)|_\D\xrightarrow{f_{\#,n}}\ZZ\cdot\D\to0
\end{equation}
is exact. 
Because of the splitting of $(C^\alt_n(D^{\bu}(f)), \epsilon^\bu)$ as a direct sum of subcomplexes
$(C_n^\alt(D^{\bu}(f))|_{\D},\e_n^\bu)$, this will prove the proposition.  Let $\D_1,\ld,\D_N$ be the distinct simplices $\D'$ of $X$ such that $f_\#(\D')=\D$. 
To prove exactness of \eqref{fincom1}, we show it is chain-isomorphic to a well-known exact complex, namely the augmented oriented simplicial 
chain complex of the standard $N-1$-simplex $\mathbf{\D^{N-1}}:=[e_1,\ld,e_N]$, with degree shifted by 1,  $C^{\text{or}}_\bu(\mathbf{\D^{N-1}})[1]$,
\begin{equation}\label{fincom2}
\dots\longrightarrow C^{\text{or}}_{k-1}(\mathbf{\D^{N-1}})\longrightarrow\cdots\longrightarrow C^{\text{or}}_1(\mathbf{\D^{N-1}})\longrightarrow C^{\text{or}}_0(\mathbf{\D^{N-1}})\longrightarrow\ZZ\longrightarrow0
\end{equation}
where $\ve(\sum_im_ie_i)=\sum_im_i$. In fact to obtain a {\it bona fide} chain isomorphism, we would have to modify the sign of the boundary operators in \eqref{fincom2}. 
Since our aim is simply to prove exactness of \eqref{fincom1}, it is enough instead to construct an isomorphism $\pp_k:C_n^\alt(D^k(f))|_\D\to C^{\text{or}}_{k-1}(\mathbf{\D^{N-1}})$ for each $k$ such that 
\beq\label{commute}\pp_{k-1}\circ\e^k_n=(-1)^{k+n-1}\p\circ\pp_k.\eeq
Recall from Lemma \ref{dirs} that $C^\alt_n(D^k(f))|_\D$ is freely generated by the $n$-chains $\alt_\ZZ(\D_{i_1}\times\cd\times \D_{i_k})/Y$, where $1\leq i_1<\cd<i_k\leq N$. 
When $k=1$, this of course just reduces to the single simplex $\D_{i_1}$. We map each such generator to the oriented $k-1$ face $[e_{i_1},\ld,e_{i_k}]$ of $\mathbf{\D^{N-1}}$, obtaining in this way an
isomorphism $$\pp_k:C^\alt_n(D^k(f))|_\D\to C^{\text{or}}_{k-1}(\mathbf{\D^{N-1}}).$$
And we define $\pp_0(m\D)=m$. To prove \eqref{commute}, 
recall first that $\e^k_n=(-1)^{n}\ve^k_{\#,n}.$ Since $\ve^k_{\#,n}$ maps alternating chains to alternating chains, so does $\e^k_n$; moreover since
\begin{equation*}
\ve^k_{\#,n}((\D_{i_1}\times\cd\times \D_{i_k})/Y)=(\D_{i_1}\times\cd\times \D_{i_{k-1}})/Y,
\end{equation*}
it follows that $\e^k_n(\alt_\ZZ(\D_{i_1}\times\cd\times \D_{i_k})/Y)$ must be a linear combination of chains
\begin{equation*}
\alt_\ZZ((\D_{i_1}\times\cd\times\widehat{\D_{i_\ell}}\times\cd\times \D_{i_k})/Y)
\end{equation*}
with $1\leq \ell\leq k$, and it is necessary only to determine the coefficient of each. The simplex $(\D_{i_1}\times\cd\times\widehat{\D_{i_\ell}}\times\cd\times \D_{i_k})/Y$ 
itself appears only once in the expression of $\ve^k_{\#,n}(\alt_\ZZ((\D_{i_1}\times\cd\times \D_{i_k})/Y)$ as a linear combination of simplices, namely as
\begin{equation*}
\ve^k_{\#,n}\sign(\sigma_{k,\ell})(\sigma_{k,\ell})_\#((\D_{i_1}\times\cd\times \D_{i_k})/Y),
\end{equation*}
where $\sigma_{k,\ell}$ is the permutation taking the ordered set $(0,\ld,k)$ to $(0, \ld,\hat \ell,\ld, k,\ell)$. 
This permutation has sign $(-1)^{k-\ell}$. It follows that this, multiplied by $(-1)^{n}$, is the coefficient of 
$\alt_\ZZ((\D_{i_1}\times\cd\times\widehat{\D_{i_\ell}}\times\cd\times \D_{i_k})/Y)$ in $\e^k(\alt_\ZZ(\D_{i_1}\times\cd\times \D_{i_k})/Y)$.  
The coefficient of $[e_{i_1},\ld,\widehat{e_{i_\ell}},\ld,e_{i_k}]$ in $\p\pp_k(\alt_\ZZ((\D_{i_1}\times\cd\times \D_{i_k})/Y)$ is $(-1)^{\ell-1}$, thus showing that \eqref{commute} holds,
and completing the proof.
\end{proof}

Putting together Proposition~\ref{prop:eps.triang}, Proposition~\ref{prop:wcc.exact} and Proposition~\ref{main} we obtain the proof of Theorem~\ref{thm:mt}, completing the construction of the GVZSS and the ICSS.

\subsection{Comparison with other proofs}
The first construction of the (cohomological) ICSS, for rational cohomology only, in \cite{gm}, was based on the exactness of the sequence of
locally constant sheaves
$$\xymatrix{0\ar[r]&\QQ_Y\ar[r]& f_*(\QQ_X)\ar[r]&f^{(2)}_*(\alt \QQ_{D^2(f)})\ar[r]&f^{(3)}_*(\alt \QQ_{D^3(f)})\ar[r]&\cd}.$$
Exactness was proved by the same argument as used here in Proposition \ref{main}. Goryunov in \cite{gor} 
used a geometric realisation $Y_G$ of the semi-simple object $D^\bu(f)$ due to Vassiliev and described in \cite{vassiliev}: 
it is formed by beginning with $X$ embedded in some high-dimensional Euclidean space, and
then adding, for each pair of points $x_1,x_2$ with $f(x_1)=f(x_2)$,  the 1-simplex $[x_1,x_2]$, for each triple $x_1, x_2, x_3$ with $f(x_1)=f(x_2)=f(x_3)$ the 2-simplex $[x_1,x_2,x_3]$, etc. 
The embedding of $X$ in $\RR^N$ is chosen so general that none of these added simplices meet one another except by the standard face inclusions. 
There is then a natural surjection  $Y_G\to Y$, and for each $y\in Y$ the preimage in $Y_G$ is a simplex. From this it follows that $Y_G\to Y$ is a homotopy equivalence, 
so that the homology of $Y$ may be computed as the homology of $Y_G$. Goryunov showed in \cite{gor} that the alternating  homology $AH_q(D^k(f))$ was isomorphic to the 
relative homology $H_{q+k-1}(Y^k_G, Y^{k-1}_G)$, where $Y^k_G$ is $X$ together with the added simplices of dimension $\leq k$, and the ICSS \eqref{eq:icss} is thus reduced to the spectral 
sequence for the homology of the filtered space $Y_G$. 

Our argument here is clearly closely related to Goryunov's proof. It seems likely that the total complex of the double complex $(C^\alt_\bu(D^\bu(f)),\p, \e^\bu)$ is isomorphic to the simplicial chain complex of some 
triangulation of $Y_G$. It would be interesting to prove this.

The construction of the GVZSS in \cite{GVZ:BNSASPS} for a closed map $f\colon X\to Y$ uses the fibred join $X*_YX$ to define the \textit{join space} $J^f(X)$ as the quotient
space of the disjoint union of spaces
\begin{equation*}
J^f_p(X)=\underbrace{X*_Y\cdots*_YX}_{\text{$p+1$ times}},\quad p=0,1,\dots
\end{equation*}
identifying $J^f_{p-1}(X)$ with each of its images $\phi_i(J^f_{p-1}(X))$ in $J^f_p(X)$ for $i=0,\ld,p$, where $\phi_i$ is the natural embedding which does not have the $i$'th copy of $X$ in its image.
Hence, there is a natural filtration of the join space $J^f(X)$ given by $\tilde{J}^f_0(X)\subset \tilde{J}^f_1(X)\subset\cdots\subset\tilde{J}^f_p(X)\subset\cdots\subset\tilde{J}^f(X)$
where $\tilde{J}^f_p(X)$ is the image of $J^f_p(X)$ in $J^f(X)$ under the quotient. The authors proved that the quotient space $\tilde{J}^f_p(X)/\tilde{J}^f_{p-1}(X)$ is homotopically equivalent to the
$p$'th suspension $S^p(W^{p+1}(f))$ of $W^{p+1}(f)$. There is a natural map $F\colon J^f(X)\to Y$ induced by $f$. 
It is proved that the fibres of $F$ are homologically trivial and by the Vietoris-Begle Theorem $H_*(J^f(X))\cong H_*(Y)$. The GVZSS \eqref{eq:gvzss} follows from the spectral 
sequence associated to the filtered space $J^f(X)$.
Since this construction uses the homotopic equivalence $\tilde{J}^f_p(X)/\tilde{J}^f_{p-1}(X)\simeq S^p(W^{p+1}(f))$ and the isomorphism 
$\tilde{H}_{p+q}(S^p(W^{p+1}))\cong \tilde{H}_q(W^{p+1})$ it is very difficult to give the differentials even of the first page of the spectral sequence. In contrast, our construction gives
such differentials explicitly.

\begin{remark}[Simplicial vs Singular]
Our theorem relates the alternating simplicial homology of the $D^k(f)$ to the simplicial homology
of the image, $Y$. It is well known that singular and simplicial homology coincide. In fact a variant of the standard proof shows that the same goes
for alternating simplicial homology and alternating singular homology.
\end{remark} 

\section{Cohomology}
The complex 
$\Hom_\ZZ(C^\alt_n(D^{\bu+1}(f),\ZZ)$
is acyclic, since $C^\alt_n(D^{\bu+1}(f))$ is a resolution of a free $\ZZ$-module, $C_n(Y)$. This shows that 
there is an ICSS also for the cohomology of the image, if we define alternating cohomology to be the homology of the complex
$\Hom_\ZZ(C^\alt_\bu(D^k(f)))$.  
\vsn 
An apparently different approach is to define alternating cochains by analogy with the definition of
alternating chains:
\begin{equation*}
C_\alt^n(D^k(f)):=\{\psi\in C^n(D^k(f)):\sigma^\#(\psi)=\sign(\sigma)\psi\ \text{ for all } \sigma\in S_k\}.
\end{equation*}
In fact, thanks to Lemma \ref{dirs},
the two approaches are equivalent. Given $\psi\in\Hom(C^\alt_n(D^k(f)),\ZZ)$, define a cochain 
$\alt_\ZZ^*\psi\in C^n(D^k(f))$ by 
composing with $\alt_\ZZ:C_n(D^k(f))\to C_n^\alt(D^k(f))$. If $\sigma\in S_k$ we check that for any chain 
$c\in C_n(D^k(f))$, 
\begin{align*}\alt_\ZZ^*\psi(\sigma_\#c)&=\psi(\alt_\ZZ(\sigma_\#(c))=\psi\left(\sum_{\tau\in S_k}\sign(\tau)\tau_\#\sigma_\#(c)\right)\\
&=\sign(\sigma)\psi\left(\sum_{\tau\in S_k}\sign(\tau)\sign(\sigma)(\tau\circ\sigma)_\#(c)\right)\\
&=\sign(\sigma)(\psi)(\alt_\ZZ(c))=\sign(\sigma)\alt_\ZZ^*\psi(c).
\end{align*}
Thus $\alt_\ZZ^*\psi\in C_\alt^n(D^k(f))$. In fact $\alt_\ZZ^*$ gives a homomorphism of cochain complexes
\begin{equation*}
\Hom_\ZZ(C^\alt_\bu(D^k(f)),\ZZ)\to C_\alt^\bu(D^k(f)),
\end{equation*}
and thus a homomorphism of cohomology groups. 

It has an inverse $\theta:C_\alt^\bu(D^k(f))\to\Hom_\ZZ(C^\alt_\bu(D^k(f)),\ZZ)$, defined as follows. Suppose that $\pp\in C^n_\alt(D^k(f))$ and $c\in C_n^\alt(D^k(f))$.
We can write $c=\sum_im_i\alt_\ZZ(c_i)$, where each $c_i$ is a simplex. This  representation is not unique, because for any $\sigma_i\in S_k$, the alternating chain $\alt_\ZZ(c_i)$ can also be written as 
$\sign(\sigma_i)\alt_\ZZ(\sigma_{i\#}(c_i))$. However,  because $\pp$ is alternating, we have
\begin{equation*}
\sum_i m_i\pp(c_i)=\sum_i\pp\bigl(m_i\sign(\sigma_i)\sigma_{i\#}(c_i)\bigr),
\end{equation*}
so that $\theta(\pp)$ is well-defined by the formula
\begin{equation*}
\theta(\pp)\Bigl(\sum_im_i\alt_\ZZ(c_i)\Bigr)=\sum_im_i\pp(c_i).
\end{equation*}
Once again, $\theta$ is a homomorphism of cochain complexes. 

Since, for any simplex $c$,
\begin{equation*}
\alt_\ZZ^*(\theta(\pp))(c)=\theta(\pp)(\alt_\ZZ(c))=\pp(c),
\end{equation*}
and for any alternating chain $c=\sum_im_i\alt_\ZZ(c_i)$,
\begin{equation*}
\theta({\alt_\ZZ^*(\psi)})(c)=\sum_im_i\alt_\ZZ^*(\psi)(c_i)=\sum_im_i\psi(\alt_\ZZ(c_i))=\psi(c),
\end{equation*}
$\theta$ and $\alt_\ZZ^*$ are mutually inverse. We conclude that
\begin{equation*}
H^*\left(\Hom_\ZZ(C^\alt_\bu(D^k(f)),\ZZ)\right)\simeq H^*(C^\bu_\alt(D^k(f)).
\end{equation*}

\section{Applications in Singularity Theory}\label{singularitytheory}
In this section we briefly survey some applications of the ICSS in singularity theory. First, 
finite analytic and subanalytic maps  can all be triangulated, thanks to the following theorem of Hardt in \cite{hardt}. 
\begin{theorem}{\em (\cite[Theorem 3]{hardt})} 
\begin{enumerate}[1.]
 \item Let $P\subset\RR^n$ be a closed finite-dimensional sub-analytic set and $N\subset \RR^m$ a real analytic
space, and let $f\colon P\to N$ be a proper light subanalytic map.  Then there exist simplicial complexes $\s G$ and $\s H$ and homeomorphisms $g\colon\s G\to P$ 
and $h\colon\s H\to f(P)$, and a simplicial map $p\colon\s G\to \s H$ such that $f=h\circ p\circ g^{-1}$.
\item Suppose, in the same situation, that $\s Q$ and $\s R$ are locally finite families of closed subanalytic sets in $P$ and $f(P)$, respectively. Then
the simplicial complexes $\s G$ and $\s H$, and the homeomorphisms $g$ and $h$, can all be chosen so that $g^{-1}(Q)$ is a subcomplex of 
$\s G$ and $h^{-1}(R)$ is a subcomplex of $\s H$ for all $Q\in \s Q$ and $R\in \s R$.
\end{enumerate}
\end{theorem}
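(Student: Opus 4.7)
The plan is to combine subanalytic stratification theory with an inductive lifting procedure, in three stages: stratify the map, triangulate the target, then lift the triangulation to the source.

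First I would invoke the Whitney stratification theorem for subanalytic sets (Hironaka, Hardt, Verdier) applied to $f$ together with the finite families $\s Q$ and $\s R$. This yields Whitney stratifications $\Sigma_P$ of $P$ and $\Sigma_N$ of $N$ such that every $Q\in\s Q$ and every $R\in\s R$ is a union of strata, and $f$ is a stratified map in the sense that each stratum $S\in\Sigma_P$ is sent submersively onto a stratum $T\in\Sigma_N$. Because $f$ is light and proper, submersivity forces $\dim S=\dim T$ and $f|_S\colon S\to T$ is a finite topological covering. The stratification also comes equipped with Thom--Mather control data: for each stratum $T\in\Sigma_N$, a subanalytic tubular neighbourhood with a subanalytic retraction to $T$, compatibly lifted along $f$ to tubular neighbourhoods of the strata lying above $T$. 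Then, applying the subanalytic triangulation theorem of Lojasiewicz--Hironaka to the stratified pair $(f(P),\Sigma_N|_{f(P)})$ with the constraint that every $R\in\s R$ be a subcomplex, I obtain the simplicial complex $\s H$ and subanalytic homeomorphism $h\colon|\s H|\to f(P)$ demanded by the statement, with each open simplex of $\s H$ lying inside a single stratum.

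The third and most delicate step is to lift $\s H$ to a triangulation $\s G$ of $P$, by induction on the dimension of the skeleton of $\s H$. Over the interior of each simplex $\Delta\subset T$, the covering property of $f$ over the stratum $T$ splits $f^{-1}(h(\mathring\Delta))$ into a finite disjoint union of sheets, each projecting homeomorphically onto $\mathring\Delta$; these are the candidate open top simplices in $\s G$ above $\Delta$. The Thom--Mather retraction near lower strata identifies each sheet over $\mathring\Delta$ with a sheet over each face $\Delta'\subset\partial\Delta$, possibly with several sheets coalescing at the boundary; the identification pattern is dictated combinatorially by the incidence relations among strata of $\Sigma_P$. After suitable barycentric subdivisions of $\s H$ (propagated upstairs), one defines $g$ on each closed simplex by interpolating the sheet structure along the lifted retraction, which makes $p:=h^{-1}\circ f\circ g$ affine on each simplex and sends vertices to vertices.

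The main obstacle is precisely this final gluing: the sheet-identification data at the boundary must be realisable by a genuine simplicial complex and by an \emph{affine} simplicial map, not merely a CW structure with a continuous projection. This is where subanalyticity is essential, through a relative version of the triangulation theorem that permits one to triangulate a subanalytic mapping cylinder compatibly with a prescribed triangulation of its boundary; one applies it to the restriction of $f$ to the tubular neighbourhood of each stratum. The assumptions that $f$ is light and proper keep the number of sheets (and hence the subdivision complexity) locally finite, so that the induction on stratum dimension terminates and produces the simplicial model of $f$ required by part~(1); part~(2) comes along for free, since $\s Q$ and $\s R$ have been built into the stratifications from the start.
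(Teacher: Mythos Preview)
The paper does not prove this theorem: it is quoted verbatim from Hardt's article \cite{hardt} and used only as a black box to justify that the finite analytic and subanalytic maps arising in singularity theory can be triangulated, so that the simplicial ICSS developed earlier applies. There is therefore no ``paper's own proof'' to compare your proposal against.

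That said, your outline is a reasonable sketch of the strategy Hardt actually follows: stratify the map compatibly with the given families, triangulate the target subordinate to the stratification, and then lift the triangulation to the source using that over each open simplex the map is a finite covering. You correctly identify the genuinely hard step---gluing the lifted sheets across boundary faces into an honest simplicial complex with an affine simplicial projection---and you are right that this requires a relative (or ``with boundary'') form of the subanalytic triangulation theorem together with repeated subdivision. Your sketch is not a proof, of course: the inductive gluing and the verification that the resulting $g$ is a homeomorphism and $p$ is simplicial occupy most of Hardt's paper and involve substantial technical work on subanalytic cylinders and cones that you have only gestured at. But as a plan it is sound, and since the present paper makes no attempt at a proof, there is nothing further to compare.
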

A map is {\it light} if the preimage of a discrete set is discrete. Every real analytic space (and hence every complex analytic space) is subanalytic, and the same goes for analytic maps.    
Thus, our construction of the ICSS applies, for example, to a stable perturbation $f_t$ of an $\s A$-finite map-germs $(\FF^n,S)\to (\FF^p,0)$ when $n\leq p$, and, 
when $n\geq p$, to the restriction to the critical space of a stable perturbation of an $\s A$-finite germ. 
The ICSS calculates the homology of the image in the former case, and of the discriminant (set of critical values) in the latter. 
The ICSS was first developed in this context. 

If $f_0$ is a map-germ $f_0:(\CC^n,0)\to (\CC^p,0)$ with $n<p$,  of corank $\leq 1$ (i.e. $\dim \ker df_0=1$) then $D^k(f_0)$ is an isolated complete 
intersection singularity (ICIS) provided its expected dimension, $n-(k-1)(p-n)$ is non-negative, see \cite{marar-mond}. If $f_t$ is a stable perturbation of 
$f_0$ then each $D^k(f_t)$ is smooth, and of expected dimension. It is thus a Milnor fibre of an ICIS, and hence, crucially, has reduced homology only in middle dimension (i.e. $n-(k-1)(p-n)$),
see \cite{hamm} (or \cite{looijenga} for an account in English). In \cite[\S2]{gor}, Goryunov shows that the homology $AH_*(X)$ of the alternating chain complex on the 
Milnor fibre $X$ of an $S_k$-invariant ICIS coincides with the alternating part of its regular homology, $H_*^\alt(X)$, and thus it too is concentrated in middle dimension. 
From this it follows that the ICSS collapses at $E^1$, and the homology of the image $Y_t$ can be read off from it. To simplify the resulting formula, let $r:=p-n$. Then if, first, $r>1$, we have
\begin{equation}\label{yet}
H_q(Y_t)=\begin{cases}
          \ZZ&\text{if $q=0$,}\\
          H_{q-k+1}^\alt(D^k(f_t))&\text{if $q=n-(k-1)(r-1)\geq 0$,}\\
          0&\text{otherwise}
         \end{cases}
\end{equation}
If $r=1$, then all the multiple point spaces contribute to the $n$'th homology of the image, and the graded module  
arising from the resulting filtration on $H_n(Y_t)$ satisfies
\begin{equation}\label{yeti}
\text{Gr}H_n(Y_t)\simeq\bigoplus_{k=2}^{n+1}H^\alt_{q-r(k-1)}(D^k(f_t)).
\end{equation}
It is known, by a Morse-theoretic argument, that in this case the image has the homotopy type of a wedge of 
$n$-spheres.

In \cite[Theorem 4.6] {houstontop}, Kevin Houston proved the remarkable theorem that even without the hypothesis on the corank of $f$, $AH_*(D^k(f_t))$ is concentrated in middle dimension, 
so that the ICSS collapses at $E^1$ and the formulae \eqref{yet} and \eqref{yeti}, with $AH_*$ replacing $ H_*^\alt$,  continue to hold. This is all the more remarkable because when $f_0$ 
has corank $>1$, the multiple point spaces $D^k(f_0)$ are no longer complete intersections, and, indeed,
examples (see e.g. \cite{mond15}) show that their homology is not necessarily concentrated in middle dimension. It is only the alternating homology that
has this pleasant property.
Houston has to replace $ H_*^\alt(D^k(f_0))$ by $AH_*(D^k(f_0))$ since it is not known whether 
the two coincide when $D^k(f_0)$ is not an ICIS. Note, however, that for rational homology, the two always coincide. An example (not from singularity theory) where they do not
coincide is given by the quotient map $q:B^2\to\RR\PP^2$, where $B^2$ is the closed unit disc in $\RR^2$ and the map $q$ identifies antipodal points on the boundary $S^1$. Here $D^2(q)$ is isomorphic to $S^1$ itself, and its involution is identified with the antipodal map. Evidently 
 $ H_0^\alt(D^2(q))$ is trivial, but one calculates that $AH_0(D^2(q))\simeq\ZZ/2\ZZ$.   


\end{document}